\begin{document}
 \bibliographystyle{plain}
 \newtheorem{theorem}{Theorem}
 \newtheorem{lemma}[theorem]{Lemma}
  \newtheorem{proposition}[theorem]{Proposition}
 \newtheorem{corollary}[theorem]{Corollary}
 \newtheorem{conjecture}[theorem]{Conjecture}
 \newtheorem{definition}{Definition}
  \newtheorem*{problem}{Problem}
   \newtheorem{construction}{Construction}
  \newtheorem{remark}{Remark}
 \newcommand{\mc}{\mathcal}
 \newcommand{\C}{\mathbb{C}}
 \newcommand{\R}{\mathbb{R}}   
 \newcommand{\N}{\mathbb{N}}
 \newcommand{\Q}{\mathbb{Q}}
 \newcommand{\Z}{\mathbb{Z}}
 
       \newcommand{\Rn}{\mathbb{R}^N}
      \newcommand{\Cn}{\mathbb{C}^N}
      \newcommand{\Sn}{\mathrm{S}^{N-1}}
       \newcommand{\Zn}{\mathbb{Z}^N}
       
      \newcommand{\lb}{\left\lbrace}
      \newcommand{\rb}{\right\rbrace}
      \newcommand{\al}{\left\langle}
      \newcommand{\ar}{\right\rangle}
      \newcommand{\ol}{\overline}
      \newcommand{\Real}{\mathrm{Re}}
      \newcommand{\Imaginary}{\mathrm{Im}}
      \newcommand{\sgn}{\mathrm{sgn}}
      \newcommand{\ra}{\rightarrow}
      \newcommand{\dsum}{\displaystyle\sum}
      \newcommand{\dint}{\displaystyle\int}
      \newcommand{\dprod}{\displaystyle\prod}
      
      				\newcommand{\mU}{\mathscr{U}}
				 \newcommand{\lp}{\left(}
				 \newcommand{\rp}{\right)}
				 \newcommand{\RKHS}{reproducing kernel Hilbert space}
 				 \newcommand{\bH}{{\bf H}}
				 \newcommand{\bh}{{\bf h}}  \newcommand{\bu}{\boldsymbol{u}}   							\newcommand{\bv}{\boldsymbol{v}}
				 \newcommand{\balpha}{\boldsymbol{\alpha}}   
				\newcommand{\ds}{\displaystyle}
				\newcommand{\bs}{\boldsymbol}
				\newcommand{\hf }{\nicefrac{1}{2}}
				\newcommand{\hfi }{\nicefrac{i}{2}}
				\newcommand{\length}{\mathrm{Length}}

 \def\today{\number\time, \ifcase\month\or
  January\or February\or March\or April\or May\or June\or
  July\or August\or September\or October\or November\or December\fi
  \space\number\day, \number\year}

\title[Vanishing Problem]{A variation on Selberg's approximation problem}
\author[Kelly ]{Michael~ Kelly}
\date{\today}
\subjclass[2010]{42A05, 42A85, 30D15, 30H99, 46E22, 47B32}

\keywords{Fourier analysis, Extremal problems, Reproducing kernel, Hilbert spaces, Explicit formula, Beurling-Selberg extremal functions, band-limited, low pass filter}
\address{Department of Mathematics, Univerisity of Texas, Austin, Texas 78712 USA}
\email{mkelly@math.utexas.edu}
\numberwithin{equation}{section}
\maketitle
\begin{abstract}
 Let $\alpha\in\C$ in the upper half-plane and let $I$ be an interval. We construct an analogue of Selberg's majorant of the characteristic function of $I$ that vanishes at the point $\alpha$. The construction is based on the solution to an extremal problem with positivity and interpolation constraints. 
 \end{abstract}

\section{Introduction}


In the 1970's, Selberg \cite{Sel,V} introduced a useful tool for proving inequalities at the interface of Fourier analysis and number theory. Given a real number $\delta>0$ and an interval $I\subset \R$, he constructed an integrable function $C:\R\ra\R$ that satisfies
	\begin{enumerate}
		\item $C(x)\geq \chi_{I}(x)$,
		\item $\hat{C}(\xi)=0$ whenever $|\xi|>\delta$, and
		\item $\dint_{-\infty}^{\infty}C(x)dx=\mathrm{Length}(I)+\delta^{-1},$
	\end{enumerate}
where $\hat{C}(\xi)$ is the Fourier transform of $C(x)$ (see Section \ref{backgroundSection}). It is not difficult to show, and we will see this below, that (2) implies $C(x)$ is the restriction to $\R$ of an entire of {\it exponential type}. The last property (3) demonstrates that $C(x)$ is a good approximation to $\chi_{I}(x)$ in $L^1 -$norm. In fact, among all integrable functions satisfying conditions (1) and (2) above, the integral appearing in (3) is minimal if, and only if, $\mathrm{Length}(I)\delta\in\Z$. In unpublished work, B. F. Logan \cite{Logan77} found the extremal majorant in the case when $\mathrm{Length}(I)\delta\not\in\Z$ and has shown that the corresponding extremal majorant is unique.  This result has been realized again in the work \cite{Littmann2013} of Littmann, presumably using different methods. \newline
\indent In this paper we study the following variation of this problem suggested to us by E. Bombieri \cite{Bom, Vaa}:
		\begin{quotation} 
		{\it
			Let $\alpha\in\C$ be a point in the upper half-plane. Construct an analogue of Selberg's majorant that vanishes at $\alpha$. 
		}
		\end{quotation}
The motivation for this problem comes primarily from the study of $L$-functions. To illustrate, suppose $L(s)$ is an $L$-function that fails the Riemann hypothesis. This means the function $z\mapsto L(1/2 +iz)$ has a zero at $z=\alpha$, where $0<\Imaginary(\alpha)<1/2$. Suppose further that we wish to use Selberg's majorant in the explicit formula for $L(s)$. We would rather not have the terms in the explicit formula that involve $\alpha$, so it would be desirable to have an analogue of Selberg's function that vanishes at $\alpha$. Applying such a function in the explicit formulas would cause the terms involving $\alpha$ to vanish, but the information is not lost as $\alpha$ is encoded in the function itself.\newline
\indent We do not determine the extremal majorant with the extra vanishing condition in this paper. Rather, we solve a different extremal problem that allows us to produce a class of majorants that are {\it good} approximations {\it and} satisfy vanishing conditions.  Our method allows us to obtain non-trivial bounds on the quantity
	 \[
	 	\rho(\alpha,I,\delta)=\inf \dint_{-\infty}^{\infty} \lb G(t) - \chi_{I}(t)  \rb dt
	\] 
where the infimum is taken over entire functions $G(z)$ that satisfy: 
	\begin{enumerate}[(i)]
		\item $G(t)\geq \chi_{I}(t)$ for all $t\in\R$,
		\item $\hat{G}(\xi)=0$ whenever $|\xi|>\delta$, and
		\item $G(\alpha)=0$.
	\end{enumerate}
Our first result concerns non-trivial bounds on $\rho(\alpha, I,\delta)$.	
\begin{theorem} \label{rhoTheorem}
	Let $\delta>0$, $\alpha\in\C$ with $\Imaginary(\alpha)>0$, and $I\subset \R$ be an interval. Then
		\begin{equation}
			\delta^{-2} \ll \rho(\alpha,I,\delta) \ll \delta^{-3}
		\end{equation}
	as $\delta\ra 0$, and
		\begin{equation}
			\rho(\alpha,I,\delta) \approx \delta^{-1}
		\end{equation}
	as $\delta\ra \infty$. The implied constants are effective and depend only on $\alpha$ and $I$.
\end{theorem}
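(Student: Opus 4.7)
The plan is to establish the four asymptotic bounds as four separate statements, since the small-$\delta$ and large-$\delta$ regimes are governed by quite different mechanisms and the upper and lower bounds call for disjoint arguments.

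For the upper bound $\rho \ll \delta^{-3}$ as $\delta \to 0$, I would exhibit an explicit admissible majorant. Set $p(z)=(z-\alpha)(z-\bar\alpha)$, a real quadratic polynomial with $p(t)\geq(\Imaginary\alpha)^{2}$ on $\R$ and $p(\alpha)=0$, and let $K(z)=\bigl(\sin(\pi\delta z/2)/(\pi\delta z/2)\bigr)^{4}$, a nonnegative entire function of exponential type $2\pi\delta$ satisfying $\int K \asymp \delta^{-1}$ and $\int z^{2}K(z)\,dz \asymp \delta^{-3}$. Take $G(z) = A\,p(z)\,K(z-t_{0})$ with $t_{0}$ the center of $I$ and $A$ chosen, depending only on $\alpha$ and $I$, large enough that $G \geq \chi_{I}$ on $\R$; this is possible because $K(\cdot-t_{0})$ is bounded below by a positive constant on $I$ provided $\delta|I|$ is sufficiently small. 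A direct moment computation then gives $\int G \ll \delta^{-3}$.

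For the lower bound $\rho \gg \delta^{-2}$ as $\delta \to 0$, I would quantify the tension between $G(\alpha)=0$ and $G\geq\chi_{I}$. Choose $t_{0}\in I$ minimizing $|t-\alpha|$. From $G(t_{0})\geq 1$, $G(\alpha)=0$, and integrating $G'$ along the segment from $t_0$ to $\alpha$ we get $1\leq|t_{0}-\alpha|\sup_{[t_{0},\alpha]}|G'|$. Bernstein's inequality combined with Phragm\'en-Lindel\"of in the strip gives $|G'(z)|\leq 2\pi\delta\|G\|_{\infty} e^{2\pi\delta|\Imaginary z|}$, so $\|G\|_{\infty} \gg \delta^{-1}$ for small $\delta$, with implied constant depending only on $\alpha$ and $I$. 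The Nikolskii inequality $\|G\|_{\infty}\ll\delta\|G\|_{1}$ for integrable entire functions of exponential type $2\pi\delta$ then upgrades this to $\int G \gg \delta^{-2}$.

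The large-$\delta$ lower bound $\rho \gg \delta^{-1}$ is essentially immediate: any admissible $G$ is a valid competitor in Selberg's classical unconstrained majorant problem, whose infimum is known to be $\asymp \delta^{-1}$. For the matching upper bound $\rho \ll \delta^{-1}$, which I expect to be the main technical obstacle, my plan is to take the classical Selberg majorant $C$ of type $2\pi\delta$ and form $G = C + |h|^{2}$, where $h$ is entire of type $\pi\delta$ chosen to minimize $\|h\|_{2}^{2}$ subject to the single interpolation constraint $h(\alpha)\,\overline{h(\bar\alpha)} = -C(\alpha)$. Then $G$ has type $2\pi\delta$, $G\geq C\geq\chi_{I}$ on $\R$, and $G(\alpha)=0$. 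The reproducing-kernel formula in the Paley-Wiener space of type $\pi\delta$ gives an explicit value for the minimal $\|h\|_{2}^{2}$ in terms of the Gram matrix of the kernels at $\alpha$ and $\bar\alpha$; for large $\delta\Imaginary\alpha$ the diagonal dominates and one finds $\|h\|_{2}^{2}\asymp|C(\alpha)|\cdot\Imaginary\alpha\cdot e^{-2\pi\delta\Imaginary\alpha}$. The principal difficulty is a matching quantitative upper bound $|C(\alpha)|\ll e^{2\pi\delta\Imaginary\alpha}/\delta$, which I would extract from the compact support of $\hat C$ and the $1/\xi$ decay of $\hat\chi_{I}$ in the integral $C(\alpha)=\int_{-\delta}^{\delta}\hat C(\xi)e^{2\pi i\xi\alpha}\,d\xi$. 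Combining these two bounds yields $\int|h|^{2}\ll\delta^{-1}$, as required.
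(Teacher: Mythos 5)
Your lower-bound arguments in both regimes coincide with the paper's: the small-$\delta$ lower bound combines the mean value theorem between a point of $I$ and $\alpha$ with Bernstein's inequality, a strip growth estimate for bounded band-limited functions, and Stein's $L^1\!\to\!L^\infty$ Nikolskii inequality; the large-$\delta$ lower bound is the trivial comparison with the unconstrained Selberg problem. Where you genuinely diverge from the paper is in the two upper bounds. For small $\delta$ the paper multiplies Selberg's majorant by $(1+F(\cdot;\alpha,-1))$, with $F$ the extremal function from the main interpolation theorem, and reads off $\int C(1+F)\leq\|C\|_1(1+\|F\|_\infty)\ll\delta^{-1}\cdot\delta^{-2}$; you instead exhibit the self-contained competitor $A(z-\alpha)(z-\bar\alpha)\bigl(\sin(\pi\delta(z-t_0)/2)/(\pi\delta(z-t_0)/2)\bigr)^4$, which bypasses the interpolation theorem entirely and is, if anything, more elementary. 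For large $\delta$ the paper again uses the multiplicative modification $C(1+F(\cdot;\alpha,-1))$ and invokes the exponential decay $\|F(\cdot;\alpha,-1)\|_\infty\ll e^{-2\pi\delta\Imaginary(\alpha)}$, so that $\rho\ll\delta^{-1}$ falls out with no need to evaluate $C$ at $\alpha$. You instead use the additive modification $C+F(\cdot;\alpha,-C(\alpha))$ (which the paper introduces as $G_\alpha$ and then deliberately sets aside, precisely because it requires knowing $C(\alpha)$), which forces you to prove the extra bound $|C(\alpha)|\ll e^{2\pi\delta\Imaginary(\alpha)}/\delta$. Your sketch for that bound is sound once you make explicit that $\|\widehat{C-\chi_I}\|_\infty\leq\int(C-\chi_I)=\delta^{-1}$, so $\hat C$ is pointwise within $\delta^{-1}$ of $\hat\chi_I$, which decays like $1/|\xi|$, after which a Laplace-type estimate of $\int_{-\delta}^{\delta}\hat C(\xi)e^{2\pi i\alpha\xi}\,d\xi$ gives the claim --- but this is exactly the extra step the paper's multiplicative trick was designed to avoid, and you correctly identify it as the principal technical obstacle in your route.
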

The upper bounds in this theorem are obtained by constructing an entire function $G(z)$ satisfying the conditions (i)-(iii) above. This construction is based on the solution to the following extremal problem.
\begin{problem} \label{interpolate}
	Let $\delta>0,$ $\alpha,\beta\in\C$ with $\Imaginary(\alpha)>0$. Determine the value of
		\begin{equation}\label{kappa}
			\kappa(\alpha,\beta,\delta)=\inf \dint_{-\infty}^{\infty} F(x)dx
		\end{equation}
	where the infimum is taken over continuous functions $F:\C\ra \C$ with the following properties:
		\begin{enumerate}[(i)]
			\item $F(x)$ is real valued and integrable on  $\R$,
			\item $F(x)\geq 0$ for each $x$ in $\R$,
			\item $F(\alpha)=\beta$
			\item $\hat{F}(\xi)$ is supported in the interval $[-\delta,\delta]$.
		\end{enumerate}
	In addition to finding the minimal integral, find explicit extremal functions for which the minimal integral is obtained.
\end{problem}
We say a function $F(z)$ is admissible for $\kappa$ if it satisfies the conditions in the above problem. Our main result is the solution to this problem.
\begin{theorem}\label{interpolateTheorem}
	Let $\delta>0$, $\alpha\in\C$ be a point in the upper half-plane, and $\beta\in\C$. Then
		\begin{enumerate}[(i)]
			\item $\kappa(\alpha,\beta,\delta)=|\beta|\kappa(\alpha,\beta/|\beta|,\delta)$,
			\item $ \kappa(\alpha,\beta,\delta)=r^{-1}\kappa(r\alpha,\beta,r\delta)$ for each $r>0$, 
			\item $\kappa(\alpha,\beta,\delta)=\kappa(\alpha+t,\beta,\delta)$ for each $t\in\R$, and
			\item 
				\[
					\dfrac{\kappa(\alpha,\beta,\delta)}{2}=\dfrac{ \;\;\; |\beta| K(\alpha,\alpha)  - \delta\Real(\beta) \;\;\; }{K(\alpha,\alpha)^{2} - \delta^{2}  }
				\]
		\end{enumerate}
		where $K:\C\times\C\ra\C$ is given by \[K(\omega,z)=\dfrac{\sin\pi\delta(z-\ol{\omega})}{\pi(z-\ol{\omega})}.\] 
	The infimum in (\ref{kappa}) is achieved for a unique admissible function $F(z)=U(z)\ol{U(\ol{z})}$ where 
		\[
			U(z)=\lambda_{1}K(\alpha,z)+\lambda_{2}K(\ol{\alpha},z),
		\]
	 and $\lambda_{1}$ and $\lambda_{2}$ are given by
		\begin{equation}\label{constants}
			\lambda_{1}=\dfrac{\beta K(\alpha,\alpha)-\delta}{K(\alpha,\alpha)^{2} - \delta^{2}}\;\;\text{ and }\;\;\lambda_{2}=\dfrac{ K(\alpha,\alpha)-\beta\delta}{K(\alpha,\alpha)^{2} - \delta^{2}}.
		\end{equation}
\end{theorem}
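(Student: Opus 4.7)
The plan is to dispatch (i)--(iii) by elementary scaling and translation substitutions in the integral defining $\kappa$, and to attack the main content (iv) via a reproducing kernel Hilbert space reformulation of the problem. For (i), if $F$ is admissible for $\kappa(\alpha,\beta,\delta)$ then $|\beta|^{-1}F$ is admissible for $\kappa(\alpha,\beta/|\beta|,\delta)$; for (ii), the dilate $F(\cdot/r)$ is admissible for a triple with scaled parameters and its integral scales by a matching factor; for (iii), the horizontal translate $F(\cdot-t)$ is admissible for $\kappa(\alpha+t,\beta,\delta)$. Taking infima and exploiting reversibility of each substitution yields the three displayed identities.

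For (iv), the first step is to recast the problem in Hilbert space. Any admissible $F$ is a nonnegative integrable entire function of exponential type determined by $\delta$. By the Krein--Akhiezer factorization theorem one may write
\[
F(z)=U(z)\,\overline{U(\bar z)}
\]
for an entire function $U$, unique up to a unimodular multiplicative constant, whose band-limit is half that of $F$. This $U$ lives in a Paley--Wiener Hilbert space $\mathcal{H}$, and $\int_{-\infty}^{\infty} F(x)\,dx = \|U\|_{L^2(\R)}^2$. The function $K(\omega,z)$ displayed in the theorem is precisely the reproducing kernel of $\mathcal{H}$, so $U(\omega)=\langle U,K(\omega,\cdot)\rangle$ for every $\omega\in\C$, and the interpolation constraint $F(\alpha)=\beta$ becomes the single bilinear identity
\[
U(\alpha)\,\overline{U(\bar\alpha)}=\beta.
\]

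The second step is an orthogonal projection argument. Let $W=\mathrm{span}\{K(\alpha,\cdot),K(\bar\alpha,\cdot)\}\subset\mathcal{H}$. Decomposing $U=P+V$ with $P\in W$ and $V\perp W$, the constraint depends only on the values $U(\alpha)$ and $U(\bar\alpha)$, which coincide with $P(\alpha)$ and $P(\bar\alpha)$, so it involves only $P$; meanwhile $\|U\|^2=\|P\|^2+\|V\|^2$. Hence any minimizer satisfies $V=0$ and lies in the two-dimensional subspace $W$. Writing $U=\lambda_1 K(\alpha,\cdot)+\lambda_2 K(\bar\alpha,\cdot)$ and using the identities $K(\alpha,\alpha)=K(\bar\alpha,\bar\alpha)$ and $K(\bar\alpha,\alpha)=K(\alpha,\bar\alpha)=\delta$, the $2\times 2$ Gram matrix in this basis is $\bigl(\begin{smallmatrix}K(\alpha,\alpha) & \delta\\ \delta & K(\alpha,\alpha)\end{smallmatrix}\bigr)$. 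Parameterizing by $a=U(\alpha)$ and $b=U(\bar\alpha)$, inverting this Gram matrix yields
\[
\|U\|^2 = \frac{K(\alpha,\alpha)\bigl(|a|^2+|b|^2\bigr)-2\delta\,\Real(a\bar b)}{K(\alpha,\alpha)^2-\delta^2}.
\]
The denominator is strictly positive because $K(\alpha,\alpha)=\sinh(2\pi\delta\,\Imaginary(\alpha))/(2\pi\,\Imaginary(\alpha))>\delta$ whenever $\Imaginary(\alpha)>0$. Under the constraint $a\bar b=\beta$ one has $\Real(a\bar b)=\Real(\beta)$ and $|a||b|=|\beta|$, so AM--GM gives $|a|^2+|b|^2\geq 2|\beta|$ with equality exactly when $|a|=|b|=\sqrt{|\beta|}$. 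Substituting produces the displayed formula for $\kappa$; inverting the Gram matrix at this extremal choice of $(a,b)$ then recovers the stated coefficients $\lambda_1,\lambda_2$ (up to the inconsequential overall phase of $U$).

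The main obstacle I anticipate is invoking the Krein--Akhiezer factorization correctly, as this is what converts a pointwise positivity constraint on $F$ into a clean Hilbert-space quadratic minimization on $U$; once the factorization is in place, the rest is an orthogonal projection reduction followed by AM--GM. Uniqueness of the extremal $F$ follows because the orthogonal projection onto $W$ is unique, the AM--GM equality case pins down $(|a|,|b|)$, and the residual global phase of $U$ leaves $F$ unchanged.
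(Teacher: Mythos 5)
Your proposal is correct and follows essentially the same route as the paper: factor the nonnegative band-limited function as $F=UU^*$, land in the Paley--Wiener space $\mathbf H_{\delta/2}$, rewrite $\int F = \|U\|_2^2$, and reduce the constrained minimization to the two-dimensional span of $K(\alpha,\cdot)$ and $K(\bar\alpha,\cdot)$, finishing with AM--GM. The one place your packaging differs is in the two-dimensional step: the paper's Lemma \ref{extremalHilbert} proceeds by expanding $\|{\bf h}-(c_1\bu+c_2\bv)\|^2\ge 0$ with a clever a priori choice of $c_1,c_2$ and then normalizing so that $\langle h,u\rangle=\beta/r$, $\langle h,v\rangle=r$, whereas you make the orthogonal projection $U=P+V$ explicit and invert the $2\times 2$ Gram matrix $\bigl(\begin{smallmatrix}K(\alpha,\alpha)&\delta\\ \delta&K(\alpha,\alpha)\end{smallmatrix}\bigr)$ to parameterize $\|U\|^2$ by the evaluation values $a=U(\alpha)$, $b=U(\bar\alpha)$. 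These are equivalent arguments; your version is arguably cleaner because the Pythagorean reduction to the span is stated rather than implicit, and the positivity of $K(\alpha,\alpha)^2-\delta^2$ (which you correctly identify as $\Imaginary(\alpha)>0$) appears transparently. You also invoke Krein--Akhiezer/Fej\'er for the factorization $F=UU^*$, whereas the paper (Proposition \ref{fejerThm}) proves it directly via Blaschke products and weak compactness; citing the known theorem is fine.

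One small caveat worth flagging. In the AM--GM equality case you conclude $|a|=|b|=\sqrt{|\beta|}$, so the extremal has $(a,b)$ of equal modulus, and you then claim that inverting the Gram matrix at this $(a,b)$ recovers the stated $\lambda_1,\lambda_2$ ``up to the inconsequential overall phase of $U$.'' That is true only when $|\beta|=1$. The constants in (\ref{constants}) produce $U(\alpha)=\beta$ and $U(\bar\alpha)=1$ (you can check this by applying the Gram matrix), which has $|a|=|\beta|\neq|b|=1$ for $|\beta|\neq 1$; no unimodular rescaling of $U$ fixes this, since a phase multiplies $a$ and $b$ by the same factor. This is an imprecision in the theorem's statement rather than in your argument --- the paper implicitly reduces to $|\beta|=1$ via item (i) (and indeed only applies the result with $\beta=-1$) --- but your ``up to phase'' remark glosses over it, and a careful write-up should either restrict (\ref{constants}) to $|\beta|=1$ or replace them by $\lambda_1=\bigl(\beta K(\alpha,\alpha)-\delta|\beta|\bigr)/\bigl(\sqrt{|\beta|}(K(\alpha,\alpha)^2-\delta^2)\bigr)$ and $\lambda_2=\bigl(K(\alpha,\alpha)|\beta|-\delta\beta\bigr)/\bigl(\sqrt{|\beta|}(K(\alpha,\alpha)^2-\delta^2)\bigr)$.
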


\begin{remark}
If $\alpha=x+iy$ and $\beta=b$ is real, then the formula in (iv) becomes
	\[
		\kappa(x+iy,b,\delta)=\dfrac{4\pi y |b|}{\sinh(2\pi y\delta )+\sgn(b)2\pi y\delta}.
	\]
We will be primarily concerned with the case when $b=-1$. 
\end{remark}
Let $F(z;\alpha,\beta)$ be the unique extremal function from Theorem {\ref{interpolateTheorem}}, notice that by using this notation we suppress the dependence of $F$ on $\delta$. Consider the following modification of Selberg's function
	\begin{equation}
		z\mapsto G_{\alpha}(z)= C(z)+F(z;\alpha,-C(\alpha)) .
	\end{equation}
Seeing that $F(x;\alpha,\beta)\geq 0$ for real $x$, it follows that  $C(x)\leq G_{\alpha}(x)$ for each real $x$. Additionally, $G_{\alpha}(\alpha)=0$, and $\hat{G}_{\alpha}(\xi)=0$ whenever $|\xi|>\delta$. So $G_{\alpha}(z)$ is an admissible function for $\rho(\alpha,I,\delta)$. \newline
\indent There is a philosophical problem with $G_{\alpha}(z)$, namely that you require knowledge of the value $C(\alpha)$ to define it, and this is a value we may be trying to avoid. Here is a way to get around this issue. Define a new function $C_{\alpha}(z)$ by
	\begin{equation}
		z\mapsto C_{\alpha}(z)=C(z)\lp 1+F(z;\alpha,-1)  \rp.
	\end{equation}
This function is admissible for $\rho(\alpha,I,2\delta)$ and has the virtue that it doesn't depend on $C(\alpha)$. Moreover, we can produce a majorant that vanishes at many distinct points by repeating this procedure many times. Indeed, for any $\alpha_{1},...,\alpha_{N}\in\C$ in the upper half-plane the function
	\begin{equation}\label{manyPts}
		z\mapsto C(z)\dprod_{n=1}^{N}\lp 1+F(z;\alpha_{n},-1)  \rp
	\end{equation}
is admissible for $\rho(\alpha_{n},I,N\delta)$ for $n=1,...,N$. \newline

The reader may notice that the function $C(z)$ plays a rather passive role in this game. If $M(z)$ is an entire function such that $M(x)\geq m(x)$ for each real $x$, then 
	\[
		z\mapsto M(z)\lp 1+F(z;\alpha,-1)  \rp
	\]
vanishes at $\alpha$ while remaining a majorant of $m(x)$. The problem of determining the extremal majorants $M(z)$ of a function  $m(x)$ is well-studied (see \cite{BMV,CL,CL2,CLV,CV2010,CVIII,GV,HV,LV,M1978,V}). The above modification for prescribed vanishing applies to the class of functions studied in this collection of papers. So the the virtue of this construction is in its adaptability. We know of two cases where extremal majorants with a vanishing constraint have been determined: (1) Vaaler \cite{Vaa} has determined the best majorant of the Kronecker delta function on $\R$ that vanishes at a point $\alpha$ in the upper half-plane, and (2) Littmann-Spanier \cite{LS} have determined the best majorant of the signum function that vanishes at a prescribed point on the imaginary axis. It would be interesting to determine extremal majorants with several vanishing constraints, even for the Kronecker delta function. Such a majorant would have applications in the theory of $L-$functions and their related objects.
\subsection{Organization of paper}
Before we jump into the proofs of our main results, we review some basic information about entire functions of exponential type. Then in section \ref{interpolateSection} we prove Theorem \ref{interpolateTheorem} and compute the Fourier transform of the extremal function $F(z;\alpha,\beta)$. Section \ref{vanishingImaginarySection} contains an analysis of the extremal function from Theorem \ref{interpolateTheorem} in the special case when $\alpha$ is purely imaginary and $\beta=-1$. In section \ref{rhoSection}, we determine a zero free region for Selberg's majorant and prove Theorem \ref{rhoTheorem}. Finally, in the last section we discuss minorants, and generalizations of Theorem \ref{interpolateTheorem} in de Branges spaces. Here we also discuss Theorem \ref{linLemma}, a result in de Branges space that may be of independent interest. Namely, if $E(z)$ is a de Branges function $E(z)$ of bounded type, then we give necessary and sufficient conditions for a de Branges reproducing kernel $K_{E}(\alpha,z)$ to be linearly independent with $K_{E}(\ol{\alpha},z)$.
\subsection{Acknowledgements}
The author would like to thank Emanuel Carneiro, Bill Beckner, Kannan Soundararajan, for their comments and suggestions. He would especially like to thank Enrico Bombieri for suggesting the problem and for his hospitality at the IAS. Most of all, he would like to thank Jeff Vaaler for his guidance and constant encouragement.
%
%
\section{Background}\label{backgroundSection}

Throughout this paper $x$ will be an element of the real numbers $\R$, $z$ will be an element of the complex numbers $\C$, and $\ol{z}$ is the complex conjugate of $z$. $\mU=\lb z\in\C:\Imaginary(z)>0 \rb$ will denote the upper half-plane of $\C$, where $\Imaginary(z)$ is the imaginary part of $z$.  If $F(z)$ is an entire function, the complex conjugate $ F^{*}(z)$ of $ F(z)$ defined by $ F^{*}(z)=\ol{F(\ol{z})}$ is also an entire function. If $I\subset \R$ is an interval, then $\chi_{I}(t)$ will denote the characteristic function of $I$. If $F(z)=F^{*}(z)$, then we say that $F(z)$ is {\it real entire}. An integrable function $ F(x)$ will be called {\it admissible} if $F(x)$ satisfies the conditions of the extremal problem under consideration (not necessarily extremality). $ F(x)$ will be called {\it extremal} if it is admissible and achieves the extreme value defined by the extremal problem.  If $f(t)$ is square integrable and continuous function on $\R$, then the Fourier transform $\hat{f}(\xi)$ is defined by
	\[
		\hat{f}(\xi)=\lim_{T\ra\infty}\dint_{-T}^{T}e^{-2\pi it\xi }f(t)dt.
	\]
We extend the definition of the Fourier transform to all suitably nice functions in the usual way.\newline 
\indent Suppose $F:\R\ra\C $ is a integrable and $\hat{F}(\xi)=0$ whenever $|\xi|>\delta$
	\begin{equation}\label{extension}
		z\mapsto \dint_{-\delta}^{\delta}e(z\cdot\xi)\hat{ F}(\xi) d\xi
	\end{equation}
is equal to $ F(x)$ for almost every $ x$ in $\Rn$ and for each closed curve $\gamma$ in $\C$ we have 
	\[
		\dint_{\gamma} \dint_{-\delta}^{\delta}e(z\xi)\hat{ F}(\xi) d\xi dz = \dint_{-\delta}^{\delta} \dint_{\gamma}e(z\xi)dz\hat{F}(\xi) d\xi=0
	\]
where we have used Fubini's theorem to interchange the order of integration. Since the curve $\gamma$ was arbitrary, (\ref{extension}) defines an entire function by Morera's theorem. Consequentially, $ F(x)$ is almost everywhere equal to the restriction to $\R$ of an entire function. We will always identify admissible functions with their extensions to entire functions. The representation (\ref{extension}) shows that admissible functions satisfy the following growth estimate in $\C$
	\begin{equation}\label{growthEstimate}
		|F(z)|\ll_{\epsilon} e^{2\pi (1+\epsilon) \delta\Imaginary(z)}
	\end{equation}
for each $\epsilon>0$. Entire functions that satisfy an estimate of the form (\ref{growthEstimate}) are called {\it entire functions of exponential type.}  An entire function $F(z)$ is said to be of exponential type $2\pi\sigma>0$ if 
	\[
		|F(z)|\ll_{\epsilon} e^{2\pi\sigma(1+\epsilon) |z|}
	\]
for each $\epsilon>0$. From the definition it is immediate that an entire function of exponential type is an entire function of order 1, however, the converse does not hold. The Riemann Xi function is an example of an entire function of order 1 that is not of exponential type. \newline
\indent In this paper we are primarily concerned with entire functions of exponential type which are bounded on the real axis. Although this subject is well studied,\footnote{We refer the interested reader to \cite{B} for further information.} we will collect some relevant material here. Throughout the paper we will use the notation of Stein \cite{Stein57,SW}. $E_{\sigma}$ will denote the space of entire functions with exponential type at most $\sigma$, and $B_{\sigma}$ will be the subspace of $E_{\sigma}$ consisting of functions which are bounded on the real axis. For $1\leq p\leq\infty$ we let $L^p$ be the space of entire functions $F(z)$ such that 
	\[
		\|F\|_{p}=\begin{cases} \;\;\; \lb  \dint_{-\infty}^{\infty}|F(x)|^{p} \rb^{1/p} & \text{ if } 1\leq p<\infty \\
				                        \;\;\;  \ds\sup_{-\infty<x<\infty}|F(x)| & \text{ if } p=\infty
				\end{cases}
			\]
is finite. The following classical theorem of Paley and Wiener gives two equivalent ways of looking at $L^{2}\cap E_{\pi\sigma}$.
\begin{theorem}[Paley-Wiener]
If $F\in L^{2}\cap E_{\pi\sigma}$, then $\hat{F}(\xi)=0$ if $|\xi|>\sigma/2$. Conversely, any function $F(x)$ that is square-integrable on $\R$ and that satisfies $\hat{F}(\xi)=0$ for $|\xi|>\sigma/2$ is a.e. equal to the restriction to the real axis of a function in $F\in L^{2}\cap E_{\pi\sigma}$.
\end{theorem}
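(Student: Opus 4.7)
The theorem has two directions; I will treat the converse first since the argument is essentially the one outlined in (\ref{extension}) and (\ref{growthEstimate}). Given $f \in L^2(\R)$ with $\hat f$ supported in $[-\sigma/2, \sigma/2]$, note that $\hat f \in L^1$ by Cauchy--Schwarz since the support is compact. I would define
\[ F(z) = \int_{-\sigma/2}^{\sigma/2} \hat f(\xi)\, e^{2\pi i z \xi}\, d\xi, \]
verify $F$ is entire by Fubini--Morera (each $\xi \mapsto e^{2\pi i z \xi}$ is entire in $z$, and the $\xi$-integral is absolutely convergent), and bound $|F(z)| \leq \|\hat f\|_1 \, e^{\pi\sigma |\Imaginary(z)|}$ by pulling the absolute value inside. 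Hence $F \in E_{\pi\sigma}$. Fourier inversion in $L^2$ shows $F|_{\R} = f$ a.e., so $F \in L^2(\R)$, completing the converse.

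For the forward direction, assume $F \in L^2 \cap E_{\pi\sigma}$. My plan is to show that for each $y \in \R$ the function $x \mapsto F(x+iy)$ lies in $L^2(\R)$ with Fourier transform $\xi \mapsto e^{2\pi y \xi}\hat F(\xi)$, bounded by $\|F(\cdot + iy)\|_2 \leq e^{\pi\sigma|y|}\|F\|_2$; sending $|y|\to\infty$ will then force $\hat F$ to vanish outside $[-\sigma/2, \sigma/2]$. The crucial step is the Plancherel--P\'olya inequality
\[ \int_{-\infty}^\infty |F(x+iy)|^2\, dx \leq e^{2\pi\sigma |y|} \int_{-\infty}^\infty |F(x)|^2\, dx. \]
I would prove this by a regularization: set $F_\epsilon(z) = F(z)\,e^{-\epsilon z^2}$, which remains entire and decays faster than any exponential along horizontal lines. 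For $F_\epsilon$ the inequality follows from Phragm\'en--Lindel\"of applied to $F_\epsilon(z) e^{-i\pi\sigma z}$ on the upper half-plane (whose boundary values on $\R$ are controlled via a mean-value argument relating pointwise values to the $L^2$-norm on small disks, and whose half-plane growth is subexponential once the Gaussian is present), together with the symmetric argument on the lower half-plane; letting $\epsilon \to 0^+$ and invoking Fatou's lemma recovers the bound for $F$.

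With Plancherel--P\'olya in hand, the Fourier transform of $x \mapsto F(x+iy)$ is identified by contour shifting: applying Cauchy's theorem to $F_\epsilon(z)\, e^{-2\pi i z \xi}$ on a tall rectangle with horizontal sides at heights $0$ and $y$, the vertical sides vanish as the rectangle widens thanks to the Gaussian damping, yielding $\widehat{F_\epsilon(\cdot + iy)}(\xi) = e^{2\pi y \xi}\,\widehat{F_\epsilon}(\xi)$; passing $\epsilon \to 0$ in $L^2$ transfers this to $F$. Plancherel then gives
\[ \int_{-\infty}^\infty e^{4\pi y \xi}|\hat F(\xi)|^2\, d\xi \leq e^{2\pi\sigma|y|} \int_{-\infty}^\infty |\hat F(\xi)|^2\, d\xi \qquad (y \in \R). \]
If $\hat F$ were nonzero on a positive-measure subset of $(\sigma/2 + \eta, \infty)$ for some $\eta > 0$, the left side would grow at least like $e^{2\pi(\sigma + 2\eta)y}$ as $y \to +\infty$, contradicting the right side; sending $y \to -\infty$ analogously excludes support in $(-\infty, -\sigma/2 - \eta)$. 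The main obstacle in this scheme is the Plancherel--P\'olya estimate; once that is secured, the remainder is a formal manipulation of Plancherel's theorem together with a standard contour deformation.
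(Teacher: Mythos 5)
The paper itself does not prove this statement: it is quoted as the classical Paley--Wiener theorem, with the surrounding discussion in Section \ref{backgroundSection} (the Fubini--Morera extension (\ref{extension}) and the growth bound (\ref{growthEstimate})) and the reference to \cite{B} standing in for a proof. Your converse direction is correct and is essentially that discussion: $\hat f\in L^{1}\cap L^{2}$ by Cauchy--Schwarz, the integral defines an entire function by Morera and Fubini, the trivial estimate gives $|F(z)|\leq\|\hat f\|_{1}e^{\pi\sigma|\Imaginary(z)|}$, and $L^{2}$ inversion identifies the boundary values almost everywhere. No objection there.

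The forward direction, however, has a genuine gap at exactly the step you flag as the crux. The regularization $F_{\epsilon}(z)=F(z)e^{-\epsilon z^{2}}$ does \emph{not} have subexponential growth in the upper half-plane: $|e^{-\epsilon z^{2}}|=e^{\epsilon(y^{2}-x^{2})}$, so along the imaginary direction $F_{\epsilon}$ grows like $e^{\epsilon y^{2}}$ and is of order $2$ there. Phragm\'en--Lindel\"of on a half-plane (angular opening $\pi$) tolerates growth of order strictly less than $1$ (or order $1$ with extra indicator control), so it cannot be invoked for $F_{\epsilon}(z)e^{\pm i\pi\sigma z}$; worse, the conclusion you would need is simply false. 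Take $F(z)=\sin(\pi\sigma z)/(\pi z)\in L^{2}\cap E_{\pi\sigma}$: then $|F(iy)|\asymp e^{\pi\sigma y}/y$, and even with the corrected sign $e^{+i\pi\sigma z}$ (the factor that decays in the upper half-plane) one gets $|F_{\epsilon}(iy)e^{i\pi\sigma(iy)}|\asymp e^{\epsilon y^{2}}/y\to\infty$, so the Gaussian-damped function is unbounded in the half-plane and no maximum principle can control it by its values on $\R$. There is also a circularity in your control of the boundary values ``via a mean-value argument'': the subharmonic mean-value inequality bounds $|F(x)|^{2}$ by an area integral over a disk straddling $\R$, which already presupposes $L^{2}$ bounds for $F$ on nearby horizontal lines, i.e.\ the Plancherel--P\'olya inequality being proved. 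Standard repairs exist: first get $\|F\|_{\infty}\ll\sigma^{1/2}\|F\|_{2}$ from the Stein inequality the paper quotes (Lemma \ref{SteinLemma}), then prove the pointwise bound $|F(x+iy)|\leq\|F\|_{\infty}e^{\pi\sigma|y|}$ by splitting the half-plane into two quadrants of opening $\pi/2$ (where Phragm\'en--Lindel\"of does permit exponential type) applied to $F(z)e^{i\pi\sigma(1+\epsilon)z}$ --- this is the mechanism behind Proposition \ref{BoasLemma} --- and then upgrade to the $L^{2}$ inequality by the classical Plancherel--P\'olya argument; alternatively, bypass it entirely via the contour-rotation proof with damping $e^{-\epsilon|x|}$ as in Rudin. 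Your remaining steps (rectangle contour shift, Plancherel, $y\to\pm\infty$) are sound up to a sign (the factor is $e^{-2\pi y\xi}$ in the paper's convention), and note you only need a bound of the shape $C_{\epsilon}e^{(2\pi\sigma+\epsilon)|y|}$ rather than the sharp constant; but all of it presupposes $F(\cdot+iy)\in L^{2}$, so the proof stands or falls with a correct treatment of that estimate, which your sketch does not yet supply.
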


The space of functions characterized in the Paley-Wiener theorem forms a Hilbert space, with respect to the $L^{2}(\R)-$inner-product $\al\cdot,\cdot\ar$, called the {\it Paley-Wiener space of type $\sigma/2$.} We will use the notation 
\[  {\bf H}_{\sigma}=L^{2}\cap E_{2\pi\sigma}\cong \lb f\in L^{2}(\R)~:~\hat{f}(\xi)=0\text{ if }|\xi|>\sigma\rb \]	
when referring to this space. Notice that ${\bf H}_{r}$ is the image of $L^{2}([-r,r])$ through the Fourier transform. This implies that the image of the standard basis through the Fourier transform
	\[
		\lb e(n\xi/\sigma) \; :\; n\in\Z   \rb \ra \lb  \dfrac{\sin2\pi \sigma (z-n/\sigma)}{\pi \sigma(z-n/\sigma)} \; : \; n\in\Z \rb
	\]
 is an orthogonal basis for ${\bf H}_{\sigma}$. From this simple observation we obtain the following interpolation result: {\it for each $F\in H_{\sigma}$
 	\[
		F(z)=\dsum_{n\in\Z} F(n/\sigma)  \dfrac{\sin2\pi \sigma (z-n/\sigma)}{\pi \sigma(z-n/\sigma)}
	\]
where the sum converges in ${\bf H}_{\sigma}$.} It follows that an element of ${\bf H}_{\sigma}$ is completely determined by the values it takes on the lattice $\sigma^{-1}\Z$ or its translates. Another important property of ${\bf H}_{\sigma}$, that we will use extensively, is the reproducing kernel property: for every $F\in {\bf H}_{\sigma}$ and $\omega\in\C$
	\begin{equation}\label{evaluation}
		|F(\omega)| \leq 2\sigma \| F\|_{2}.
	\end{equation}
This shows that evaluation is a continuous linear functional. It then follows from the Riesz representation theorem that evaluation can be realized as the inner product with an element of ${\bf H}_{\sigma}$. This element is called the reproducing kernel, and for ${\bf H}_{\sigma}$ it is given by
	\begin{equation}
		K(\omega,z)=\dfrac{\sin2\pi\sigma(z-\ol{\omega})}{\pi(z-\ol{\omega})}.
	\end{equation}
So (\ref{evaluation}) implies that for each $F\in {\bf H}_{\sigma}$ and $\omega\in\C$
	\[
		F(\omega)=\al  F, K(\omega,\cdot) \ar.
	\]

We close this section with several inequalities that we will use in the sequel. 
\begin{lemma}[Bernstein] \label{BernsteinLemma}
	If $F\in B_{\sigma}$, then $\| F^{\prime} \|_{\infty}\leq \sigma \| F\|_{\infty}$.
\end{lemma}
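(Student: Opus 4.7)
The plan is to derive this classical inequality, due to S.\,N.\ Bernstein, from an interpolation formula of M.\ Riesz that expresses the derivative of any $F\in B_\sigma$ at a point $x\in\mathbb{R}$ as a weighted sum of its values on a translated half-integer lattice. The target formula is
\begin{equation*}
F'(x) \;=\; \frac{\sigma}{\pi^{2}} \sum_{k \in \mathbb{Z}} \frac{(-1)^{k-1}}{(k-1/2)^{2}}\, F\!\left(x + \frac{(2k-1)\pi}{2\sigma}\right),
\end{equation*}
from which Bernstein's inequality follows immediately by the triangle inequality together with the elementary identity $\sum_{k\in\mathbb{Z}}(k-1/2)^{-2}=\pi^{2}$.

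The first step is to upgrade the real-axis bound on $F$ to a two-sided global estimate. A Phragm\'en--Lindel\"of argument applied to $F(z)e^{-i\sigma z}$ in the upper half-plane, and separately to $F(z)e^{i\sigma z}$ in the lower half-plane, will yield
\begin{equation*}
|F(x+iy)| \;\le\; \|F\|_\infty\, e^{\sigma|y|} \qquad \text{for every } x+iy \in \mathbb{C}.
\end{equation*}
This rests on the hypothesis $F\in B_\sigma$ together with the growth estimate (\ref{growthEstimate}), and is the standard refinement of the indicator diagram once boundedness on $\mathbb{R}$ is known.

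With this global bound in place, I would prove the interpolation formula by applying the residue theorem to the integrand $F(z)/(z^{2}\cos(\sigma z))$ over a sequence of expanding square contours $\Gamma_{R}$ whose sides lie midway between consecutive real zeros $z_{k}=(2k-1)\pi/(2\sigma)$ of $\cos(\sigma z)$. After translating so that the point of interest is $x=0$, the integrand has a double pole at $z=0$ whose residue produces $F'(0)$ with an explicit constant, together with simple poles at each $z_{k}$ inside $\Gamma_{R}$ whose residues reproduce precisely the weighted terms appearing in the series above. Summing the residues and equating them to the contour integral is what produces the Riesz formula.

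The main technical obstacle is justifying that the contour integral tends to zero as $R\to\infty$. This is a delicate balance point: the bound $|F(z)|\le\|F\|_\infty e^{\sigma|y|}$ is \emph{exactly} matched by the decay $|\cos(\sigma z)|^{-1}\asymp e^{-\sigma|y|}$, so only the factor $1/z^{2}$ supplies genuine integrable decay. The contours $\Gamma_{R}$ must be chosen so that on the horizontal sides the imaginary part is large enough that $|\cos(\sigma z)|\asymp e^{\sigma|y|}$ and the $1/|z|^{2}$ factor forces the side integral to vanish, while on the vertical sides one has $|\cos(\sigma z)|\ge c>0$ uniformly because of the placement halfway between zeros of $\cos(\sigma z)$. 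Once the vanishing of $\oint_{\Gamma_{R}}$ is secured, the residue expansion rearranges into the Riesz interpolation formula and the inequality $\|F'\|_\infty\le\sigma\|F\|_\infty$ follows at once.
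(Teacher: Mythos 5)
The paper itself does not prove this lemma; it is stated as a classical fact with a reference to Boas~\cite{B}. So the comparison is between your proof and the standard literature, not the paper's argument. The route you chose --- deriving Bernstein from the M.~Riesz interpolation formula, which in turn follows from a Phragm\'en--Lindel\"of bound and a residue computation with $F(z)/\bigl(z^{2}\cos(\sigma z)\bigr)$ --- is indeed one of the standard proofs (it is essentially the proof given in Boas, Ch.~11), and the overall architecture is sound. The residue at $0$ does give $F'(0)$, the residues at $z_{k}=(k-\tfrac12)\pi/\sigma$ do produce the weighted series, and the normalization $\sum_{k\in\mathbb{Z}}(k-1/2)^{-2}=\pi^{2}$ makes the inequality immediate.

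There are two flaws. The first is a small sign error: to apply Phragm\'en--Lindel\"of in the upper half-plane you must look at $F(z)e^{+i\sigma z}$, not $F(z)e^{-i\sigma z}$, since for $y>0$ one has $|e^{i\sigma z}|=e^{-\sigma y}$, which is what cancels the exponential growth of $F$; with your sign the auxiliary function grows like $e^{2\sigma y}$ and Phragm\'en--Lindel\"of does not apply. The stated conclusion $|F(x+iy)|\le\|F\|_\infty e^{\sigma|y|}$ is nevertheless correct.

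The second flaw is a genuine gap in the contour estimate, precisely at the point you identified as delicate. On the vertical sides $\Re z=\pm R$ you claim only $|\cos(\sigma z)|\ge c>0$. That lower bound is true but not sufficient: with it, the integrand on a vertical side is bounded by $\|F\|_\infty e^{\sigma|y|}/(c R^{2})$, and integrating $e^{\sigma|y|}$ over $|y|\le R$ gives a contribution of size $e^{\sigma R}/R^{2}$, which diverges. What saves the argument is that when $R=k\pi/\sigma$ the vertical sides sit where $\cos(\sigma R)=\pm1$, and then
\begin{equation*}
\bigl|\cos\bigl(\sigma(R+iy)\bigr)\bigr| \;=\; \cosh(\sigma y) \;\ge\; \tfrac12 e^{\sigma|y|},
\end{equation*}
so $|F(z)/\cos(\sigma z)|\le 2\|F\|_\infty$ on the vertical sides as well, and the $1/R^{2}$ factor then forces those contributions to $0$. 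In other words, you need the same $e^{\sigma|y|}$-growth of $|\cos(\sigma z)|$ on the vertical sides as on the horizontal ones --- the uniform lower bound $c>0$ alone does not close the estimate. Once this is corrected, the proof is complete.
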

Let $L^{p}$ be the space of entire functions $F(z)$ for which $\|F\|_{p}$ is finite. It is known (see \cite{B}) that if $F\in E_{\sigma}\cap L^p$ for $p\geq1$, then $\| F^{\prime} \|_{p}\leq \sigma \| F\|_{p}.$ The following can be found in \cite[p.~83]{B}.
	\begin{proposition} \label{BoasLemma}
		 If $f\in B_{\sigma}$, then
		  \begin{equation} \label{boasIneq}
			|f(x+iy)|\leq \|f\|_{\infty} \cosh \sigma y
		  \end{equation}
	    for all real numbers $x$ and $y$.
	\end{proposition}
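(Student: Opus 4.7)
The plan is to apply the Phragm\'en--Lindel\"of principle to auxiliary entire functions designed to absorb the growth of $f$ in each half-plane separately, and then combine the two one-sided bounds.

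First, set $M=\|f\|_{\infty}$ and consider the entire function $g_{+}(z)=e^{i\sigma z}f(z)$ in the closed upper half-plane $\ol{\mU}$. Since $f\in E_{\sigma}$, the growth estimate gives $|f(z)|\ll_{\epsilon} e^{(\sigma+\epsilon)|z|}$ for every $\epsilon>0$; combined with the identity $|e^{i\sigma z}|=e^{-\sigma y}\leq 1$ for $y\geq 0$, this yields the subexponential-in-direction estimate
\[
    |g_{+}(z)|\ll_{\epsilon}e^{(\sigma+\epsilon)|z|-\sigma y},\qquad z\in\ol{\mU}.
\]
On the real axis $|g_{+}(x)|=|f(x)|\leq M$. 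The Phragm\'en--Lindel\"of principle for a half-plane then forces $|g_{+}(z)|\leq M$ throughout $\ol{\mU}$, which is precisely $|f(x+iy)|\leq M e^{\sigma y}$ for $y\geq 0$. A symmetric argument with $g_{-}(z)=e^{-i\sigma z}f(z)$ in the lower half-plane gives $|f(x+iy)|\leq M e^{-\sigma y}$ for $y\leq 0$, and together these yield the classical Plancherel--P\'olya bound $|f(x+iy)|\leq M e^{\sigma|y|}$.

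To sharpen to the stated $\cosh(\sigma y)$ bound, I would exploit that, because $f$ is bounded on the \emph{entire} real axis, the two one-sided bounds at rates $\pm\sigma$ can be averaged; the natural majorant that simultaneously dominates $e^{\sigma y}$ and $e^{-\sigma y}$ on the right fraction of the plane is $\cosh(\sigma y)=\tfrac{1}{2}(e^{\sigma y}+e^{-\sigma y})$. Concretely, I would apply Phragm\'en--Lindel\"of to the ratio $f(z)/\cosh(\sigma(z-\zeta))$ for a judiciously chosen complex shift $\zeta$ that places the zeros of the denominator away from the region of interest, checking that the ratio is of non-positive indicator in each direction and bounded on $\R$ by $M/|\cosh(\sigma(x-\zeta))|$. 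Letting $\zeta$ tend to the real axis then recovers the sharp bound along a line $\Imaginary(z)=y_{0}$.

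The main obstacle is precisely this last step: the raw Phragm\'en--Lindel\"of argument with $e^{\pm i\sigma z}$ only produces the weaker exponential envelope, whereas $\cosh(\sigma y)$ requires a more delicate choice of divisor since $\cosh(\sigma z)$ has zeros on the imaginary axis and cannot be used naively. Circumventing these zeros --- by a shift, limiting argument, or appeal to the Poisson representation of $\log|f(z)|$ in a half-plane against the kernel $\cosh$ --- is the technical heart of the proof, and is the point at which I would lean on the detailed treatment in \cite{B}.
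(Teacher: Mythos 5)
Your first paragraph is a correct Phragm\'en--Lindel\"of argument, but it only yields the Plancherel--P\'olya bound $|f(x+iy)|\le\|f\|_\infty e^{\sigma|y|}$, which is strictly weaker than $\cosh(\sigma y)=\tfrac{1}{2}(e^{\sigma y}+e^{-\sigma y})$ for every $y\neq 0$. The second paragraph, where the sharpening must happen, does not contain a proof, and the device you sketch --- running Phragm\'en--Lindel\"of on $f(z)/\cosh(\sigma(z-\zeta))$ --- cannot work. The auxiliary factors $e^{\pm i\sigma z}$ succeed because their moduli $e^{\mp\sigma y}$ decay exponentially in the relevant half-plane, precisely cancelling the potential growth of $f$ there; $\cosh(\sigma(z-\zeta))$, by contrast, has indicator $\sigma|\cos\theta|$, concentrated along the real axis, and is bounded on every vertical line. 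So $f(z)/\cosh(\sigma(z-\zeta))$ still grows like $e^{\sigma|y|}$ as $y\to\pm\infty$, the half-plane maximum principle yields nothing new, and no choice of shift $\zeta$ repairs this. The paper offers no proof of this proposition either --- it simply cites \cite[p.~83]{B} --- so deferring to Boas is not itself a defect, but the intermediate sketch is not a viable route to close the gap.

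There is also a problem with the statement as you (and the paper) have recorded it. For general $f\in B_\sigma$ the $\cosh$ bound is false: taking $f(z)=e^{-i\sigma z}\in B_\sigma$ with $\|f\|_\infty=1$, one has $|f(i)|=e^{\sigma}>\cosh\sigma$. The theorem in \cite{B} carries the additional hypothesis that $f$ be real-valued on $\R$ (equivalently $f=f^*$), which excludes $e^{\pm i\sigma z}$ and makes $\cos\sigma(z-x_0)$ extremal. In the paper this causes no harm because the proposition is applied only to real functions (Selberg's majorant and its derivative), but your blind attempt cannot reach a $\cosh$ bound without that hypothesis, and for general $f\in B_\sigma$ the sharp two-sided estimate really is the $e^{\sigma|y|}$ you already proved.
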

\begin{lemma}[Stein, \cite{Stein57}]\label{SteinLemma}
	If $f\in E_{\sigma}\cap L^{p}$, $1\leq p<\infty$, then for a universal $c>0$
		\[ \| f\|_{\infty} \leq c\sigma^{1/p}\| f\|_{p}.\]
\end{lemma}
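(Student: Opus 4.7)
The plan is to combine the subharmonicity of $|f|^p$ with the Plancherel--Polya inequality in order to bound $|f(x)|^p$ pointwise by a multiple of $\|f\|_p^p$, and then to optimize over a disc radius.

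Since $\log|f|$ is subharmonic on $\C$ for an entire function $f$, so is $|f|^p$. The submean value inequality applied on the disc $D(x,r) \subset \C$ therefore gives
\[
|f(x)|^p \le \frac{1}{\pi r^2}\int_{D(x,r)}|f(z)|^p\,dA(z) \le \frac{1}{\pi r^2}\int_{-r}^{r}\Bigl(\int_{\R}|f(u+iv)|^p\,du\Bigr)\,dv.
\]
Next I would invoke the Plancherel--Polya inequality
\[
\int_{\R}|f(u+iv)|^p\,du \le e^{p\sigma|v|}\,\|f\|_p^p\qquad (v \in \R),
\]
valid for $f \in E_\sigma$ with $f|_\R \in L^p(\R)$. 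Substituting this and using $\int_{-r}^{r}e^{p\sigma|v|}\,dv \le 2re^{p\sigma r}$ yields
\[
|f(x)|^p \le \frac{2e^{p\sigma r}}{\pi r}\,\|f\|_p^p.
\]
Choosing $r = 1/(p\sigma)$ minimizes $e^{p\sigma r}/r$ and gives $|f(x)|^p \le (2ep\sigma/\pi)\|f\|_p^p$, and hence $\|f\|_\infty \le (2e/\pi)^{1/p}\,p^{1/p}\,\sigma^{1/p}\,\|f\|_p$. Both $(2e/\pi)^{1/p}$ and $p^{1/p}$ are bounded uniformly in $p \in [1,\infty)$, so the resulting constant $c$ is universal.

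The main obstacle is the Plancherel--Polya inequality itself, which requires some care because $f$ is not a priori known to be bounded on $\R$. The standard route is first to bootstrap qualitative boundedness of $f|_\R$ from the type-$\sigma$ growth estimate \eqref{growthEstimate} via a subharmonic-majorization argument on bounded regions; then to apply the Phragm\'en--Lindel\"of principle to $f(z)e^{\pm i\sigma z}$ in the upper and lower half-planes to obtain the pointwise bound $|f(x+iy)| \le \|f\|_\infty e^{\sigma|y|}$ (essentially Proposition \ref{BoasLemma}); and finally to use subharmonicity of $|f|^p$ once more to upgrade this pointwise bound to the integrated form of Plancherel--Polya stated above. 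Once this ingredient is in hand, the remainder of the argument is purely mechanical. As a side benefit, the same scheme followed by Lemma \ref{BernsteinLemma} gives an alternative proof with sharper constant $c = 2$: one picks $x_0$ with $|f(x_0)| \ge \|f\|_\infty - \epsilon$, uses Bernstein to show $|f| \ge \|f\|_\infty/2 - \epsilon$ on an interval of length $1/\sigma$ around $x_0$, and integrates $|f|^p$ there.
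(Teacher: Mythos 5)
The paper offers no internal proof to compare against: Lemma \ref{SteinLemma} is quoted from Stein's paper \cite{Stein57}. Judged on its own, your skeleton --- sub-mean-value inequality for the subharmonic function $|f|^p$ on $D(x,r)$, the Plancherel--P\'olya inequality on horizontal lines, then optimization at $r=1/(p\sigma)$ --- is a standard and correct derivation of this Nikolskii-type bound, and your bookkeeping (including the uniform boundedness of $(2ep/\pi)^{1/p}$ for $p\ge 1$) is fine. If you cite Plancherel--P\'olya as the classical theorem it is, the proof is complete; that would be no worse than the paper's own decision to cite Stein.

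The gap is in your sketch of Plancherel--P\'olya, and it sits exactly at the point you flag as the main obstacle. First, ``bootstrap qualitative boundedness of $f|_\R$ from the growth estimate via a subharmonic-majorization argument on bounded regions'' is not an argument: the sub-mean-value inequality on a disc about $x\in\R$ bounds $|f(x)|^p$ by a planar integral of $|f|^p$ near $x$, and without the very line-integral (or strip-integral) bounds you are trying to prove there is no control of that quantity uniformly in $x$; a priori boundedness of $E_\sigma\cap L^p$ functions on $\R$ is essentially the content of the lemma, not an elementary preliminary. Second, even granting $|f(x+iy)|\le\|f\|_\infty e^{\sigma|y|}$ from Phragm\'en--Lindel\"of, ``subharmonicity once more'' cannot upgrade this sup bound to $\int_\R|f(u+iv)|^p\,du\le e^{p\sigma|v|}\|f\|_p^p$: a bound on the maximum along a horizontal line says nothing about its $L^p$ mass. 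The standard repair fixes both defects at once: since $\log^+|f(t)|\le p^{-1}|f(t)|^p$, the function $f$ lies in the Cartwright class, so in each half-plane one has the harmonic majorization $\log|f(x+iy)|\le\sigma|y|+\frac{|y|}{\pi}\int_\R\frac{\log|f(t)|}{(t-x)^2+y^2}\,dt$; exponentiating, applying Jensen's inequality with respect to the Poisson probability measure, and integrating in $x$ yields Plancherel--P\'olya with no a priori boundedness assumption. (Alternatively, once boundedness on $\R$ is actually known, one writes $g(z)=f(z)e^{i\sigma z}\in H^\infty(\mU)$ and uses the Poisson representation of $g$ plus Jensen; Phragm\'en--Lindel\"of by itself does not suffice.) Your closing remark extracting the constant $2$ from Lemma \ref{BernsteinLemma} carries the same hidden hypothesis, since Bernstein's inequality is vacuous unless $\|f\|_\infty<\infty$, so it cannot replace the missing step either.
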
	    

%
%

\section{Interpolation in the upper half-plane \\ Proof of Theorem \ref{interpolateTheorem}}\label{interpolateSection}
In this section we prove Theorem \ref{interpolateTheorem}. The proof has four steps:
	\begin{enumerate}
		\item reduce the $L^1$ variational problem to an $L^{2} $ variational problem by showing that each admissible function $F(z)$ can be factored as $F(z)=U(z)\ol{U(\ol{z})}=U(z)U^{*}(z)$.
		\item realize that $U(z)$ is an element of the Paley-Wiener space ${\bf H}_{\delta/2}$, and that $U(z)=\al U, K(z,\cdot) \ar$ for any $z\in\C$.
				\item Reformulate the problem as minimizing $\| U \|^{2}_{2}$ subject to the condition  
				\[
					\al U,K(\alpha,\cdot)  \ar \ol{\al U,K(\ol{\alpha},\cdot)  \ar}=\beta.
				\]
			This is just another way of writing $F(\alpha)=\beta$.
		\item Solve the problem in the previous step in a generic inner product space. 
	\end{enumerate}
Steps (1) and (2) are performed in Proposition \ref{fejerThm} and step (4) is performed in Lemma \ref{extremalHilbert}.
	\begin{proposition}\label{fejerThm}
		Suppose $F(z)\in {\bf H}_{\delta}$ is real valued and non-negative on the real axis and that $F(z)$ is not identically zero. 
		Then there exists an entire function $U(z)\in {\bf H}_{\delta/2}$ such that $U(z)$ is zero-free in $\mU$ and $F(z)=U(z)U^{*}(z)$.
	\end{proposition}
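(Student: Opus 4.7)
The statement is a Fej\'er--Riesz--Akhiezer factorization for entire functions of exponential type that are nonnegative on the real axis, and I would prove it in three steps.

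First, I would organize the zeros of $F$. Since $F = F^*$ (equivalent to $F$ being real on $\R$) and $F \geq 0$ on $\R$, every real zero has even multiplicity and every non-real zero $\omega$ is accompanied by $\ol{\omega}$ at the same multiplicity. Enumerate a multiset $\{\omega_n\}$ in the closed lower half-plane that lists each real zero with half its multiplicity and each non-real conjugate pair by its lower-half-plane representative (with the common multiplicity). Then the zero set of $F$, counted with multiplicity, is $\{\omega_n\} \cup \{\ol{\omega_n}\}$.

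Next, I would build $U$ via Akhiezer's factorization theorem. Since $F \in \bH_{\delta}$, the Paley--Wiener representation (\ref{extension}) together with Lemma \ref{SteinLemma} gives $F \in L^{\infty}(\R)$, so $\log^+ F(x)$ is bounded and
\[
\dint_{-\infty}^{\infty} \dfrac{\log^+ F(x)}{1+x^2}\,dx < \infty.
\]
This is precisely the hypothesis of Akhiezer's factorization (see Boas \cite[Ch.~7]{B}), which produces an entire function $U(z)$ of exponential type exactly $\pi\delta$, with zero set $\{\omega_n\}$, satisfying $|U(x)|^2 = F(x)$ for every real $x$. Because $\{\omega_n\}$ lies in the closed lower half-plane, $U$ has no zeros in $\mU$.

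Finally, the product $U(z) U^{*}(z)$ is an entire function of exponential type $2\pi\delta$ that is nonnegative on $\R$ and whose zeros match those of $F$ with multiplicity. Hence $F / (U U^{*})$ is entire, zero-free, of exponential type zero, and a positive constant on $\R$; Liouville forces it to be a positive constant, which I absorb into $U$ by rescaling. Integrating $|U(x)|^2 = F(x)$ shows $U \in L^{2}(\R)$, and combined with exponential type $\pi\delta$ the Paley--Wiener theorem places $U \in \bH_{\delta/2}$.

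The main obstacle is Step 2: the convergence of the canonical Hadamard product over $\{\omega_n\}$, the adjustment of the exponential factor so that $U$ has type exactly $\pi\delta$, and the verification that the open upper half-plane remains zero-free. All three are packaged in Akhiezer's theorem, whose only nontrivial input here -- the integrability of $\log^+ F(x)/(1+x^2)$ -- is immediate from $F \in L^{\infty}$; the remaining bookkeeping about zeros and types is routine.
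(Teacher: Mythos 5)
Your proof is correct, but it takes a genuinely different route from the paper. You organize the zero set of $F$ and then invoke Akhiezer's factorization theorem (Boas, Ch.~7) as a black box: once you check the hypothesis $\int \log^+ F(x)\,(1+x^2)^{-1}\,dx < \infty$, which is immediate from $F \in L^\infty$ via Lemma~\ref{SteinLemma}, the theorem hands you an outer factor $U$ of type $\pi\delta$ with no zeros in $\mU$, and a short zero-matching argument plus the identity theorem (your Liouville step is really an identity-theorem step, since $|U(x)|^2 = F(x)$ already forces $F/(UU^*)\equiv 1$ once it is entire) finishes it. The paper instead builds the outer function by hand inside the Hilbert space $\bH_\delta$: it forms the partial Blaschke products $B_N(z)$ that reflect the upper-half-plane zeros of $F$ across $\R$, observes that $F_N = B_N F$ has the same $L^2(\R)$-norm as $F$, extracts a weakly convergent subsequence by Banach--Alaoglu, and upgrades weak to pointwise convergence using the reproducing-kernel property of $\bH_\delta$. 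It then extracts $U$ as a literal square root of the limit $G$, using that the zeros of $G$ have even multiplicity. Your route is shorter and cites a standard theorem tailored to exactly this situation; the paper's route is longer but self-contained within the Paley--Wiener/RKHS machinery it has already set up, avoids any Hadamard-product convergence bookkeeping (which your appeal to Akhiezer hides), and produces the outer factor as a Hilbert-space limit rather than via a canonical product. Both are legitimate; the only thing I would flag in yours is that you should state explicitly which version of Akhiezer's theorem you are using, since some formulations conclude $F = UU^*$ as entire functions directly, in which case your final paragraph is unnecessary, while others only give the boundary identity $|U(x)|^2 = F(x)$, in which case the cleanup is needed but the phrase ``a positive constant on $\R$'' should read ``identically $1$ on $\R$.''
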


	\begin{proof}
	Let $\lb \omega_{n} : n=1,2...\rb$ be the zeros of $F(z)$, listed with appropriate multiplicity, in the upper half plane and let 
		\[
			B_{N}(z)=\dprod_{n=1}^{N}\dfrac{1-z/\ol{\omega_{n}}}{1-z/\omega_{n}}.
		\]
	We define a sequence of entire functions $F_{N}(z)$ by $F_{N}(z)=B_{N}(z)F(z)$. Each of the functions $F_{N}(z)$ is in ${\bf H}_{\delta}$ by the Paley-Wiener theorem. Notice that $\|F\|=\|F_{N}\|$ for each $N$ because $|B_{N}(x)|=1$ for each $x\in\R$. Thus, by the Banach-Alaoglu theorem, the sequence $\lb F_{N}(z) \rb$  is weakly compact in $H_{\delta}$. Since this space is a reproducing kernel space, it follows that $F_{N}(z)\ra G(z)$ pointwise for a subsequence. Observe that $|B_{N}(z)|\geq 1$ if $z\in\mU$. It follows that $G(z)$ is zero free in $\mU$ and that $|G(t)|=|F(t)|$ for real $t$. This shows that $F(z)^{2}=F(z)F^{*}(z)=G(z)G^{*}(z)$. In particular, the non-real zeros of $G(z)$ occur with even multiplicity. \newline
\indent Since $F(z)$ is real valued and non-negative on $\R$, the zeros of $G(z)$ occur with even multiplicity and so there is an entire function $U(z)$ for which $G(z)=U(z)^{2}$. Then $F(z)^{2}=\lb U(z)U^{*}(z) \rb^{2}$ and since $F(z)$ is real valued and non-negative on $\R$ it follows that $F(z)=U(z)U^{*}(z)$. \newline
\end{proof}

      \begin{lemma} \label{extremalHilbert}
        Let $\bf H$ be a complex vector space with inner product $\al\cdot,\cdot\ar$, $\beta\in\C$. Let $\bu,\bv\in{\bf H}$ be linearly independent, $\eta=\|\bu\|\|\bv\|$, and $\nu=\al \bu,\bv\ar$. If
          \begin{equation} \label{Hcondition}
            \al{\bf h},\bu\ar\overline{\al {\bf h},\bv\ar}=\beta
            \end{equation}
      then
            \begin{equation} \label{equal}
                 \|{\bf h}\|^{2}\geq2\dfrac{|\beta|\eta-\Real(\beta\nu)}{\eta^{2}-|\nu|^{2}}.
            \end{equation}  
        If ${\bf h}$ satisfies (\ref{Hcondition}), then $\bf h$ achieves equality in (\ref{equal}) if and only if
            \begin{equation} \label{hilbertExtremal}
                  \omega{\bf h}= \lambda_{1}\bu +\lambda_{2}\bv
            \end{equation}
        for some $|\omega|=1$, where
            \begin{equation*}
              \lambda_{1}=\dfrac{\gamma\beta \eta -|\nu|}{\eta^{2}-|\nu|^{2}} \;
              \text{ and } \;
                \lambda_{2}=\dfrac{\eta-\beta \nu}{\eta^{2}-|\nu|^{2}}
              \end{equation*}
        and $\gamma=\nu/|\nu|$.
      \end{lemma}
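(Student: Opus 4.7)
The plan is to exploit the bilinear structure of the constraint $\al\bh,\bu\ar\ol{\al\bh,\bv\ar}=\beta$ by first reducing to the two-dimensional span of $\bu,\bv$, then re-parametrizing by the pair $A=\al\bh,\bu\ar$, $B=\al\bh,\bv\ar$, and finally applying an AM-GM inequality. The point is that the constraint only couples $|A|$ and $|B|$, so once the norm is expressed as a quadratic form in $(A,B)$ with a fixed cross term, the minimization over magnitudes is elementary.

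First, I would write $\bh=\bh_{\parallel}+\bh_{\perp}$ where $\bh_{\parallel}$ is the orthogonal projection of $\bh$ onto $\mathrm{span}\{\bu,\bv\}$. Since $\bh_{\perp}$ is orthogonal to both $\bu$ and $\bv$, the constraint only sees $\bh_{\parallel}$, while $\|\bh\|^2=\|\bh_{\parallel}\|^2+\|\bh_{\perp}\|^2$. Hence the minimizer must lie in $\mathrm{span}\{\bu,\bv\}$, and we may write $\bh=\lambda_{1}\bu+\lambda_{2}\bv$ uniquely (by linear independence). The Gram matrix $G=\bigl(\begin{smallmatrix}\|\bu\|^{2} & \ol{\nu}\\ \nu & \|\bv\|^{2}\end{smallmatrix}\bigr)$ has determinant $\eta^{2}-|\nu|^{2}>0$ by strict Cauchy-Schwarz, so inverting gives $\lambda_{1},\lambda_{2}$ explicitly in terms of $A,B$.

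Next I would use the identity $\|\bh\|^{2}=\lambda_{1}\ol{A}+\lambda_{2}\ol{B}$ (which follows from $\bh=\lambda_{1}\bu+\lambda_{2}\bv$) together with the inversion to obtain
\[
	\|\bh\|^{2}=\frac{\|\bv\|^{2}|A|^{2}+\|\bu\|^{2}|B|^{2}-2\Real(\nu A\ol{B})}{\eta^{2}-|\nu|^{2}}.
\]
Substituting $A\ol{B}=\beta$ makes the cross term equal to the constant $-2\Real(\nu\beta)$, and AM-GM gives $\|\bv\|^{2}|A|^{2}+\|\bu\|^{2}|B|^{2}\ge 2\|\bu\|\|\bv\||A||B|=2\eta|\beta|$. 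This yields (\ref{equal}).

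Equality in AM-GM forces $\|\bv\||A|=\|\bu\||B|$, which together with $|A||B|=|\beta|$ pins down $|A|$ and $|B|$. The constraint $A\ol{B}=\beta$ then determines the phase difference between $A$ and $B$, leaving a single overall phase free — this is the parameter $\omega$ in the statement. The last, and most tedious, step is to pick that phase so that $\lambda_{1},\lambda_{2}$ acquire the stated form: substitute the minimizing $A,B$ into the inverted Gram system and simplify using $\bar\nu=|\nu|\bar\gamma=|\nu|/\gamma$, $\nu\bar\nu=|\nu|^{2}$. I expect this bookkeeping with $\gamma=\nu/|\nu|$ to be the main obstacle, since it is where the asymmetric appearance of $\gamma$ (in $\lambda_{1}$ but not $\lambda_{2}$) has to emerge naturally from an a priori symmetric calculation.
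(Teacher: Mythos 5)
Your argument is correct in its main steps and organizes the proof differently from the paper while resting on the same mechanism. The paper first reduces by scaling to the case $|\beta|=\|\bu\|=\|\bv\|=1$ and $\nu=\ol{\nu}$, then applies the completion-of-the-square inequality $\|\bh-(c_{1}\bu+c_{2}\bv)\|^{2}\geq 0$ with $c_{1},c_{2}$ chosen in advance, rotates $\bh$ by a phase so that $\al\bh,\bu\ar=\beta/r$ and $\al\bh,\bv\ar=r$ with $r>0$, and finishes with $r+r^{-1}\geq 2$. You instead skip the normalization, project $\bh$ onto $\mathrm{span}\{\bu,\bv\}$, invert the Gram matrix to obtain
\[
\|\bh_{\parallel}\|^{2}=\frac{\|\bv\|^{2}|A|^{2}+\|\bu\|^{2}|B|^{2}-2\Real(\nu A\ol{B})}{\eta^{2}-|\nu|^{2}},
\]
note that the constraint $A\ol{B}=\beta$ freezes the cross term, and close with AM--GM. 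Underneath, both proofs exploit the same fact: once the cross term is fixed, minimizing over $|A|,|B|$ with $|A||B|=|\beta|$ is exactly $t+t^{-1}\geq 2$. Your route buys transparency (the Gram inversion replaces the paper's unmotivated ansatz for $c_{1},c_{2}$, and the projection step makes it plain that the extremizer lies in the span), while the paper's $\|\bh-c\|^{2}\geq 0$ trick handles the out-of-span component without any explicit projection. One caution about the bookkeeping you postpone: equality in AM--GM forces $|A|=\sqrt{|\beta|\,\|\bu\|/\|\bv\|}$ and $|B|=\sqrt{|\beta|\,\|\bv\|/\|\bu\|}$, and feeding these into the inverted Gram system produces coefficients carrying an overall factor $|\beta|^{-1/2}$ relative to the $\lambda_{1},\lambda_{2}$ printed in the lemma. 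The printed formulas match only when $|\beta|=1$ (the case $\beta=-1$ actually used in the paper); for general $\beta$ they cannot be right on scaling grounds, since the minimal $\|\bh\|^{2}$ is degree one in $\beta$ whereas the extremizer must be degree $1/2$. So your plan is sound, but the ``tedious step'' will not reproduce the lemma's $\lambda$'s verbatim when $|\beta|\neq 1$.
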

      \begin{proof}  By scaling considerations, it suffices to prove the claim when $|\beta|=\|\bu\|=\|\bv\|=1$ and $\nu=\overline{\nu}$.\newline
          For any $c_{1}$ and $c_{2}$
            \begin{equation}
              \|{\bf h}-(c_{1}\bu+c_{2}\bv)\|^{2}\geq 0.
            \end{equation}
          Expanding this out gives
            \begin{equation}\label{eq:ineq1}
                  2\Real \Big\lbrace \al {\bf h},c_{1}\bu+c_{2}\bv\ar\Big\rbrace
                  - \|c_{1}\bu+c_{2}\bv\|^{2} \leq \|{\bf h}\|^{2}.
            \end{equation}
          Equality occurs in (\ref{eq:ineq1}) if and only if ${\bf h}=c_{1}\bu+c_{2}\bv$. We let $\bf h$ satisfy $\al{\bf h},\bu\ar\overline{\al{\bf h},\bv\ar}=\beta$ and set
            $$c_{1}=\dfrac{\beta-\nu}{1-\nu^{2}}$$
          and
              $$c_{2}=\dfrac{1-\beta\nu}{1-\nu^{2}}.$$
          It can be checked that
            \begin{equation}
             \|c_{1}\bu+c_{2}\bv\|^{2}=\dfrac{2-2\Real\left(\beta\right)\nu}{1-\nu^{2}}=2\Real(c_{2}).
            \end{equation}
          Scaling $\bf h$ by a suitable constant of absolute value 1 we find that
            \begin{equation*}
                  \al{\bf h},\bu\ar=\beta/r \; \text{ and } \; \al{\bf h},\bv\ar=r
            \end{equation*}
          for some $r>0$. Thus
            \begin{equation*}
                2\Real \Big\lbrace \al {\bf h},c_{1}\bu+c_{2}\bv\ar\Big\rbrace
                =2 \lb \dfrac{\Real(c_{1}\overline{\beta})}{r}+\Real(c_{2})r \rb.
            \end{equation*}
          Seeing that  $\Real(c_{1}\overline{\beta})=\Real(c_{2})$, we have
            \begin{equation}
                 \left( \dfrac{1}{r}+r -1\right)2\Real(c_{2}) \leq \|{\bf h}\|^{2}.
            \end{equation}
          This completes the proof.
      \end{proof}

\begin{proof}[Proof of Theorem \ref{interpolateTheorem}]
  If $F(z)$ satisfies
  	\[
		\dint_{-\infty}^{\infty}F(x)dx<\infty,
	\]
then $F\in L^{1}\cap E_{2\pi\delta}$ and, by Lemma \ref{SteinLemma}, $F\in B_{2\pi \delta}$. It follows that $F\in L^{1}\cap L^{\infty}\cap E_{2\pi\delta}$ which in turn implies that $F\in{\bf H}_{\delta}$. It follows from Proposition \ref{fejerThm} that there exists a function $U\in {\bf H}_{\delta/2}$ such that $F(z)=U(z)U^{*}(z)$. Using this factorization we can write
	\[
		\kappa(\alpha,\beta,\delta)=\inf \| U\|_{2}^{2}
	\]
where the infimum is taken over functions $U\in {\bf H}_{\delta/2} $ such that  
				\[
					\al U,K(\alpha,\cdot)  \ar \ol{\al U,K(\ol{\alpha},\cdot)  \ar}=\beta.
				\]
(By the reproducing property of ${\bf H}_{\delta/2}$, this is just another way of writing $F(\alpha)=\beta$.) The functions $K(\alpha,z)$ and $K(\ol{\alpha},z)$ are linearly independent if, and only if, 
\[ K(\alpha,\alpha)^{2}-|K(\alpha,\ol{\alpha})|^{2}=\lb\dfrac{\sinh 2\pi \delta\Imaginary(\alpha)}{2\pi\Imaginary(\alpha)}\rb^{2}-\delta^{2}\neq0.\]
    This holds if, and only if, $\Imaginary(\alpha)\neq 0$. The result now follows from Lemma \ref{extremalHilbert} with $\bu=K(\alpha,z)$ and $\bv=K(\ol{\alpha},z)$.
\end{proof}

In the following lemma we compute the Fourier transform of $F(z;\alpha,\beta)$. 
     \begin{lemma}
        		Let $K(\omega,z)=\sin(\pi\delta(z-\ol{\omega}))/\pi(z-\ol{\omega})$, and $\mathscr{F}$ denote the Fourier transform, and
                		\begin{equation}
                			G_{\alpha,\omega}(t)=\mathscr{F}\big( K(\alpha,\cdot)K(\omega,\cdot)\big)(t),
                		\end{equation}
        		then
                		\begin{equation*}
                  		 G_{\alpha,\omega}(t) =
                  		e^{-\pi i t(\overline{\omega}+\overline{\alpha})} \dfrac{\sin\big\lbrace \pi (\overline{\omega}-\overline{\alpha})(\delta -|t| )_{+}\big\rbrace}{\pi(\overline{\omega}-\overline{\alpha})}                   		
                  		\end{equation*}
        		Let $F(z)=F(z;\alpha,\beta)$ be  the extremal function identified in Theorem \ref{interpolateTheorem}, then
                		\begin{equation}
                		\mathscr{F}\big( F \big)(t)= (|\lambda_{1}|^{2}+|\lambda_{2}|^{2})G_{\alpha,\overline{\alpha}}(t)+(\ol{\lambda_{1}}\lambda_{2}e(-\alpha)+\lambda_{1}\ol{\lambda_{2}}e(-\ol{\alpha}))(\delta-|t|)_{+},
                		\end{equation}
        		where $\lambda_{1}$ and $\lambda_{2}$ are given by (\ref{constants}), and $(x)_{+}=\max\lb 0,x\rb$.
        \end{lemma}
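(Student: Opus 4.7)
The plan is to first establish a closed form for the Fourier transform of the product $K(\alpha,\cdot)K(\omega,\cdot)$ via the convolution theorem, then expand $F(z) = U(z)U^{*}(z)$ as a linear combination of such products and invoke linearity.

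For the first identity, I would start by computing the Fourier transform of a single reproducing kernel. The representation
\[
K(\omega,x) = \dint_{-\delta/2}^{\delta/2} e^{2\pi i \xi (x-\ol{\omega})}\, d\xi
\]
is verified by direct evaluation of the integral (it telescopes to $\sin \pi\delta(x-\ol{\omega})/\pi(x-\ol{\omega})$), and it realizes $K(\omega,x)$ as an inverse Fourier transform of $\xi\mapsto e^{-2\pi i \ol{\omega}\xi}\chi_{[-\delta/2,\delta/2]}(\xi)$. Thus $\widehat{K(\omega,\cdot)}(\xi) = e^{-2\pi i \ol{\omega}\xi}\chi_{[-\delta/2,\delta/2]}(\xi)$. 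The convolution theorem then gives
\[
G_{\alpha,\omega}(t) = \dint_{-\delta/2}^{\delta/2} e^{-2\pi i \ol{\alpha} s}\, e^{-2\pi i \ol{\omega}(t-s)}\,\chi_{[-\delta/2,\delta/2]}(t-s)\, ds.
\]
The intersection of $[-\delta/2,\delta/2]$ with $[t-\delta/2,t+\delta/2]$ is an interval of length $(\delta-|t|)_{+}$ centered at $t/2$; substituting $u = s - t/2$ symmetrizes the domain, the exponential factors out as $e^{-\pi i t(\ol{\omega}+\ol{\alpha})}$, and the remaining symmetric integral evaluates to $\sin\{\pi(\ol{\omega}-\ol{\alpha})(\delta-|t|)_{+}\}/\pi(\ol{\omega}-\ol{\alpha})$, yielding the stated formula.

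For the Fourier transform of $F$, I would expand the factorization. Using $\ol{K(\omega,\ol{x})} = K(\ol{\omega},x)$, which holds because conjugation exchanges $\omega$ with $\ol{\omega}$ in the definition, we have on $\R$
\[
U^{*}(x) = \ol{U(x)} = \ol{\lambda_{1}}K(\ol{\alpha},x) + \ol{\lambda_{2}}K(\alpha,x),
\]
so multiplying out gives
\[
F(x) = (|\lambda_{1}|^{2}+|\lambda_{2}|^{2})K(\alpha,x)K(\ol{\alpha},x) + \lambda_{1}\ol{\lambda_{2}}K(\alpha,x)^{2} + \ol{\lambda_{1}}\lambda_{2}K(\ol{\alpha},x)^{2}.
\]
Applying $\mathscr{F}$ termwise produces $G_{\alpha,\ol{\alpha}}$, $G_{\alpha,\alpha}$, and $G_{\ol{\alpha},\ol{\alpha}}$ respectively. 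The coincident cases are recovered as limits $\omega\to\alpha$ and $\omega\to\ol{\alpha}$ of the formula from step one, where the $\sin$-over-argument ratio tends to $(\delta-|t|)_{+}$, leaving exponential prefactors $e^{-2\pi i t\ol{\alpha}}$ and $e^{-2\pi i t\alpha}$. Collecting these terms produces the stated expression.

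I expect no serious obstacle: the entire proof is a careful convolution computation plus bookkeeping. The one spot requiring attention is handling the intersection of the two shifted sinc supports and writing its length uniformly as $(\delta-|t|)_{+}$; the symmetrizing substitution $u = s-t/2$ is what produces the clean $e^{-\pi i t(\ol{\omega}+\ol{\alpha})}$ phase factor rather than an asymmetric one, and this is the only calculation where one can easily lose a sign or a factor of two.
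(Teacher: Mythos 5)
Your argument is correct and follows essentially the same route as the paper: both pass to Fourier transforms of the individual kernels, invoke the convolution theorem to reduce to an integral over the intersection of two shifted intervals of length $\delta$, and then expand $F=UU^{*}$ using $\ol{K(\omega,\ol{x})}=K(\ol\omega,x)$. The only cosmetic difference is that the paper evaluates that intersection integral by a case split on the sign of $t$, whereas your substitution $u=s-t/2$ handles both cases at once; you also spell out the "straightforward" expansion of $\mathscr{F}(F)$ and the limiting cases $G_{\alpha,\alpha}$, $G_{\ol\alpha,\ol\alpha}$ that the paper leaves to the reader.
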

        \begin{proof}
        If $a\leq b$ and $\xi\in\C$, then
                $$ \dint_{a}^{b} e^{2\pi i s\xi}ds= \dfrac{\sin( \pi\xi(b-a))}{\pi\xi}e^{\pi i (b+a)\xi}. $$
        Now, if $\xi\in\C$ and  $\delta>0$, then
                $$ \dint_{-\infty}^{\infty} e^{2\pi i s \xi} \chi_{[-\delta/2,\delta/2]}(s)\chi_{[-\delta/2,\delta/2]}(t-s)ds
                =\dfrac{\sin \pi\xi(\delta -|t| )_{+}}{\pi\xi}e^{\pi i t\xi}. 
                $$
        This is seen by observing that
            \begin{eqnarray*}
              \chi_{[-\delta/2,\delta/2]}(s)\chi_{[-\delta/2,\delta/2]}(t-s)
              &=& \chi_{[-\delta/2,\delta/2]\cap [t-\delta/2,t+\delta/2]}(s).
            \end{eqnarray*}
            Hence
              \begin{equation*}
                  \dint_{-\infty}^{\infty} e^{2\pi i s \xi} \chi_{[-\delta/2,\delta/2]}(s)\chi_{[-\delta/2,\delta/2]}(t-s)ds
                  =\begin{cases}
                      \dint_{t-\delta/2}^{\delta/2} e^{2\pi i s\xi}ds & \text{ if } t\geq 0 \\ \\
                      \dint_{-\delta/2}^{t+\delta/2} e^{2\pi i s\xi}ds & \text{ if } t <0
                    \end{cases}.
              \end{equation*}
            When $t\geq 0 $
                  \begin{eqnarray*}
                   \dint_{t-\delta/2}^{\delta/2} e^{2\pi i s\xi}ds
                    &=& \dfrac{\sin \pi\xi(\delta-t)}{\pi\xi}e^{\pi i t\xi}
                   \end{eqnarray*}
            and when $t<0$
                  \begin{eqnarray*}
                   \dint_{-\delta/2}^{t+\delta/2} e^{2\pi i s\xi}ds
                    &=&  \dfrac{\sin \pi\xi(\delta-|t|)}{\pi\xi}e^{\pi i t\xi}.
                   \end{eqnarray*}
            Finally if $|t|<\delta$, then
              \begin{eqnarray*}
                \dint_{-\infty}^{\infty} e^{-2\pi i s t } K_{\alpha}(s)K_{\omega}(s)ds
                &=& e^{-2\pi i \overline{\omega}t}\dint_{-\infty}^{\infty}e^{2\pi i(\overline{\omega}-\overline{\alpha})s}\chi_{[-\delta/2,\delta/2]}(s)\chi_{[-\delta/2,\delta/2]}(t-s)ds \\
                &=&e^{-2\pi i \overline{\omega}t}e^{\pi i t(\overline{\omega}-\overline{\alpha})} \dfrac{\sin \pi (\overline{\omega}-\overline{\alpha})(\delta -|t| )}{\pi(\overline{\omega}-\overline{\alpha})} \\
                &=& e^{-\pi i t(\overline{\omega}+\overline{\alpha})} \dfrac{\sin \pi (\overline{\omega}-\overline{\alpha})(\delta -|t| )}{\pi(\overline{\omega}-\overline{\alpha})}.
              \end{eqnarray*}
            The rest of the proof is straightforward.
        \end{proof}

\section{Vanishing on the imaginary axis}\label{vanishingImaginarySection}
As before, we let $F(z;\alpha,\beta)$ be the extremal function coming from Theorem \ref{interpolateTheorem}.  In this section we analyze $F(z;\alpha,\beta)$ in the special case $\alpha=ir$ for some $r>0$ and $\beta=-1$. The estimates we obtain extend to $\alpha\in\mU$ by the identity 
	\[
		F(z;\alpha,\beta)=F(z-\Real(\alpha);i\Imaginary(\alpha),\beta),
	\]
which follows from item {\it (iii)} from Theorem \ref{interpolateTheorem}.\newline
\indent In the case when $\alpha=ir$ and $\beta=-1$, the function $U(z)$ given in Theorem \ref{interpolateTheorem}  is given by\footnote{Note that $K(ir,ir)=K(-ir,-ir)=(2\pi r)^{-1}\sinh(2\pi r\delta)$ and $K(ir,-ir)=\delta$}
	\begin{equation}\label{u}
		U(z)= \dfrac{ K(ir,z)-  K(-ir,z)}{K(ir,ir)- K(-ir,ir)},
	\end{equation}
where
	\[
		K(\omega,z)=\ds\int_{-\delta/2}^{\delta/2}e^{-2\pi i(z-\ol{\omega})\xi}d\xi=\dfrac{\sin \pi\delta(z-\ol{\omega})}{\pi(z-\ol{\omega})}.
	\]
We record some basic estimates for $U(z)$ in the following lemma.
\begin{lemma}\label{lemmaU}
	For each $r>0$, the function $U(z)$ defined by (\ref{u}) has the following properties
	\begin{enumerate}[(i)]
		\item $\hat{U}(\xi)=0$ if $|\xi|>\delta/2$,
		\item $U(ir)U^{*}(ir)=- 1$,
		\item $\| U \|_{\infty}^{2}\leq 4\pi r\delta \lp\sinh(2\pi r\delta)- 2\pi r\delta \rp^{-1}$, and
		\item $\|U\|_{2}^{2}= \kappa(ir,-1,\delta)=4\pi r( \sinh(2\pi r\delta)- 2\pi r\delta )^{-1}$
		\item 		\[
						\left|\hat{U}\ast\ol{\hat{U}}(\xi)\right|\leq \|U\|_{2}^{2},
					\]		
	\item	\[
			|U(z)|\leq \dfrac{\|U\|^{2}}{|z-ir|}\exp\lp{\pi\delta\Imaginary(z\pm ir)}\rp
		\]
	where $\pm$ is chosen according to whether $\Imaginary(z)>0$ or not.
	\end{enumerate}
\end{lemma}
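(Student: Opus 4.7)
The plan is to dispose of items (i)--(v) as routine consequences of the explicit formula for $U$ together with results already in hand, and then to devote the real work to the pointwise estimate in (vi).

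Part (i) is immediate: each kernel $K(\omega,z)=\int_{-\delta/2}^{\delta/2}e^{2\pi i\ol{\omega}\xi}e^{-2\pi iz\xi}\,d\xi$ already has its Fourier transform (in $z$) supported in $[-\delta/2,\delta/2]$, and this property passes to the linear combination defining $U$. For (iv), observe that by construction $U$ is precisely the factor produced by Theorem~\ref{interpolateTheorem} with data $(\alpha,\beta)=(ir,-1)$, so $U(z)U^{*}(z)=F(z;ir,-1)$; integrating gives $\|U\|_2^2=\kappa(ir,-1,\delta)$, and the closed form is read directly off the Remark after that theorem with $y=r$, $b=-1$. Item (ii) is then the interpolation constraint $F(ir;ir,-1)=-1$ re-expressed as $U(ir)U^{*}(ir)=-1$. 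For (iii), the Paley--Wiener representation $U(x)=\int_{-\delta/2}^{\delta/2}\hat U(\xi)e^{2\pi ix\xi}\,d\xi$ together with Cauchy--Schwarz and Plancherel yields $\|U\|_\infty\le\sqrt{\delta}\,\|U\|_2$; inserting (iv) produces the stated bound. For (v), I would write $(\hat U\ast\ol{\hat U})(\xi)=\int\hat U(\eta)\,\ol{\hat U(\eta-\xi)}\,d\eta$ and apply Cauchy--Schwarz on the Fourier side to bound it by $\|\hat U\|_2^2=\|U\|_2^2$.

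The substantive step is (vi). My starting point is the clean representation $U(z)=\tfrac{\|U\|_2^2}{2}\bigl[K(ir,z)-K(-ir,z)\bigr]$, obtained by inserting the formulas for $\lambda_1,\lambda_2$ from Theorem~\ref{interpolateTheorem} and recognising that $K(ir,ir)-K(-ir,ir)=\sinh(2\pi r\delta)/(2\pi r)-\delta$ equals $2/\|U\|_2^2$ by (iv). Each kernel is then estimated using the elementary inequality $|\sin\pi\delta w|\le e^{\pi\delta|\Imaginary w|}$, giving $|K(\pm ir,z)|\le e^{\pi\delta|\Imaginary(z\mp ir)|}/(\pi|z\mp ir|)$.

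In the upper half-plane, the geometric inequality $|z+ir|\ge|z-ir|$ (immediate on squaring, since $\Imaginary(z)>0$) together with $|\Imaginary(z)\pm r|\le\Imaginary(z)+r=\Imaginary(z+ir)$ collapses the two kernel estimates into a single bound with denominator $|z-ir|$ and exponential factor $\exp(\pi\delta\,\Imaginary(z+ir))$, matching the claimed form. For $\Imaginary(z)<0$ I would invoke the reflection identity $U^{*}(z)=-U(z)$, easily verified from $K(ir,\bar z)=\ol{K(-ir,z)}$ and the reality of $K(\pm ir,\pm ir)$: it forces $|U(z)|=|U(\bar z)|$ and reduces the lower half-plane case to the previous one. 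The main place where care is needed is the bookkeeping of signs in matching the bound to the stated form of (vi); the only nontrivial geometric input is the half-plane inequality displayed above.
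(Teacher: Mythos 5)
Your treatment of (i)--(v) matches the paper's: (i) from the kernel definition, (ii) from the interpolation constraint (the paper verifies $U(ir)=-U^{*}(ir)=1$ directly), (iii) by Cauchy--Schwarz against $K(t,\cdot)$ with $K(t,t)=\delta$, (iv) by computing $\langle U,U\rangle$ from (\ref{u}), and (v) by Plancherel plus Cauchy--Schwarz/Young on the Fourier side. For (vi) the paper says only ``a trivial estimation of $\sin(z)$,'' while you spell out the kernel bounds and add the clean reflection identity $U^{*}=-U$ to reduce to $\Imaginary(z)>0$; that is the same estimate, presented more completely. Two small cautions. First, with the paper's convention $K(\omega,z)=\sin\pi\delta(z-\ol{\omega})/\pi(z-\ol{\omega})$ one has $K(\pm ir,z)$ built from $z\pm ir$ (not $z\mp ir$); your displayed bound swaps the two kernels, which is harmless here since both terms enter symmetrically in $|K(ir,z)|+|K(-ir,z)|$, but the labels are crossed. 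Second, the ``bookkeeping'' you flag at the end is real: in the lower half-plane the reflection gives $|U(z)|\le \|U\|_{2}^{2}\,e^{\pi\delta(r-\Imaginary z)}/|z+ir|$, i.e.\ denominator $|z+ir|$ and exponent $-\pi\delta\,\Imaginary(z-ir)$, which does not literally coincide with the form printed in (vi) (denominator $|z-ir|$, exponent $+\pi\delta\,\Imaginary(z-ir)$, the latter being negative there); the printed statement appears to have a sign/typo issue that the paper's one-line proof does not resolve either, so your hesitation is warranted rather than a gap in your argument.
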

\begin{proof}
Item (i) follows from the definition of $K(\omega,z)$ and linearity of the Fourier transform. Item (ii) is easy to verify by using the definition of $U(z)$ and noticing that $U(ir)=- U^{*}(ir)$=1. For the remaining items, we use the following facts: (a) the function $t \mapsto K(\omega,t)$ is square integrable on $\R$ and (b),

	\[
		K(\omega,\lambda)=\al K(\omega,\cdot),K(\lambda,\cdot) \ar=\ol{K\lp \ol{\lambda},\ol{\omega}\rp} 
	\]
where $\al \cdot,\cdot \ar$ is the usual $L^{2}-$inner product on $\R$. To compute $\|U\|_{2}^{2}$ we write $\| U\|_{2}^{2}=|\al U,U \ar|^{2}$ and use (\ref{u}):
	\begin{eqnarray*}
		\| U \|_{2}^{2}
		&=& 2\dfrac{ K(ir,ir)- K(ir,-ir)}{\lp K(ir,ir)-K(ir,-ir) \rp^2} \\
		&=& \dfrac{ 4\pi r}{ \sinh(2\pi r\delta)- 2\pi r\delta }. 
	\end{eqnarray*}
By the reproducing property we see that $U(\omega)=\al U, K(\omega,\cdot) \ar$. It follows from the Cauchy-Schwarz inequality that
	\[
		|U(t)|^{2} \leq \delta \| U\|_{2}^{2}.
	\]
		To finish off the proof of the lemma, notice that by the Plancherel theorem and Young's inequality we have
		\begin{equation}
			\left|\hat{U}\ast\ol{\hat{U}}(\xi)\right|\leq \|U\|_{2}^{2}.
		\end{equation}
	A trivial estimation of $\sin(z)$ yields
		\begin{equation}
			|U(z)|\leq \dfrac{\|U\|_{2}^{2}}{|z-ir|}\exp\lp{\pi\delta\Imaginary(z\pm ir)}\rp
		\end{equation}
	where $\pm$ is chosen according to whether $\Imaginary(z)>0$ or not.
\end{proof}

\section{Selberg's Majorant with Vanishing \\ Proof of Theorem \ref{rhoTheorem}}\label{rhoSection}
In this section we will prove Theorem \ref{rhoTheorem}. We begin by determining a zero free region for Selberg's function $C(z)$. This will serve as a model for how to bound $\rho(I,\alpha,\delta)$ from below. Using this model we will prove the lower bounds appearing in Theorem \ref{rhoTheorem}. This result is a consequence of the fact that admissible functions are Lipschitz on horizontal strips in $\C$ together with a straightforward analysis of the Lipschitz constant. Then we will construct an admissible function for $\rho(I,\alpha,4\delta)$ and estimate its integral.
	\begin{proposition}\label{zeroFree}
		Suppose $\delta\length(I) \ll1$.  If  $C(\omega)= 0$, then $|\omega|= \Omega(\delta^{-1})$. 
	\end{proposition}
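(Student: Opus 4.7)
The plan is to combine several general properties of entire functions of exponential type that were recorded in the background section. First I would show that the hypothesis $\delta\length(I)\ll 1$ forces $\|C\|_{\infty}\ll 1$. Since $C\in L^{1}\cap E_{2\pi\delta}$ with $\|C\|_{1}=\length(I)+\delta^{-1}$, Stein's inequality (Lemma \ref{SteinLemma}) gives
\[
  \|C\|_{\infty}\ll \delta\|C\|_{1}= \delta\length(I)+1\ll 1.
\]
Then Bernstein's inequality (Lemma \ref{BernsteinLemma}) yields $\|C'\|_{\infty}\leq 2\pi\delta\|C\|_{\infty}\ll\delta$, so that $C'\in B_{2\pi\delta}$; Boas's inequality (Proposition \ref{BoasLemma}) applied to $C'$ then upgrades this to the complex estimate
\[
  |C'(x+iy)|\leq 2\pi\delta\,\|C\|_{\infty}\cosh(2\pi\delta y).
\]

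Next I would exploit the majorant property $C\geq \chi_{I}$ to force any zero of $C$ away from $I$. Fix a point $x_{0}\in I$, so that $C(x_{0})\geq 1$. If $C(\omega)=0$, integrate $C'$ along the straight line segment from $x_{0}$ to $\omega$. Since $x_{0}$ is real, along this segment we have $|\Imaginary(z)|\leq|\Imaginary(\omega)|\leq |\omega-x_{0}|$, so the derivative estimate above gives
\[
  1\leq |C(x_{0})-C(\omega)|\leq 2\pi\delta\,\|C\|_{\infty}\cosh\bigl(2\pi\delta|\omega-x_{0}|\bigr)\,|\omega-x_{0}|.
\]
Writing $s=2\pi\delta|\omega-x_{0}|$, this reads $s\cosh(s)\gg 1$, which forces $s$ to exceed a positive absolute constant (if $s\leq 1$, then $\cosh(s)\leq\cosh 1$ and one gets $s\gg 1$ immediately; if $s>1$ there is nothing to prove). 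Hence $|\omega-x_{0}|\gg\delta^{-1}$. Since $x_{0}\in I$ is bounded in terms of $I$ alone, taking $\delta$ sufficiently small (with threshold depending only on $I$) yields $|\omega|\gg\delta^{-1}$ as required.

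The only real obstacle is bookkeeping: one must verify that the Bernstein/Boas combination legitimately applies to $C'$ (which it does, since $C'\in E_{2\pi\delta}$ and $\|C'\|_{\infty}<\infty$ by Bernstein), and that the line-segment path stays in the horizontal strip on which the derivative bound is controlled. Once these are in place, the conclusion is a short elementary estimation.
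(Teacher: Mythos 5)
Your proof is correct and follows essentially the same route as the paper: the key inequality $1\ll\delta|\omega|\cosh(2\pi\delta|\omega|)$ is extracted from Stein, Bernstein, and Boas applied in the same sequence, with the majorant property supplying a point where $C\geq 1$. The only presentational difference is that you integrate $C'$ along the segment from $x_0$ to $\omega$ rather than invoking (as the paper does) a complex ``mean value theorem'' to locate a point $u\omega$ with $|C'(u\omega)|\geq 1/|\omega|$; your path-integral version is actually the cleaner justification, since the naive MVT equality does not hold for $\C$-valued functions, though both yield the same bound.
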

We will use the following corollary of Lemma \ref{BernsteinLemma}, Proposition \ref{BoasLemma}, and Lemma \ref{SteinLemma}.

\begin{corollary}\label{LipschitzCorollary}
	Let $F\in E_{\sigma}\cap L^{1}$ such that $F(0)\geq 1$ and $\|F\|_{1}=L$. Then if $\omega$ is a zero of $F(z)$, then
		\[
				1\ll L\sigma^{2}|\omega|\cosh(\sigma\Imaginary(\omega))
		\]
	 where the implied constant is absolute.
\end{corollary}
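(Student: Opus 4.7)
The plan is to show that any zero $\omega$ of $F$ satisfies the claimed lower bound by combining a uniform derivative bound for $F'$ on $\C$ with a mean-value argument along the segment from $0$ to $\omega$.

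First, since $F\in E_\sigma\cap L^1$, Lemma \ref{SteinLemma} yields $\|F\|_\infty\ll \sigma L$; in particular $F\in B_\sigma$. I would next upgrade Bernstein's inequality off the real axis by horizontal slicing: for each fixed $y\in\R$ the translated slice $F_y(x):=F(x+iy)$ is entire of exponential type $\sigma$ in $x$ and, by Proposition \ref{BoasLemma}, satisfies $\|F_y\|_\infty\leq \|F\|_\infty\cosh(\sigma y)$. Applying Lemma \ref{BernsteinLemma} to $F_y$ then gives
\[
|F'(x+iy)|=|F_y'(x)|\leq \sigma\|F_y\|_\infty\ll \sigma^2 L\cosh(\sigma y),
\]
so that the uniform bound $|F'(z)|\ll \sigma^2 L\cosh(\sigma\Imaginary(z))$ holds on all of $\C$.

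Next, given a zero $\omega$ of $F$, I would parametrize the segment from $0$ to $\omega$ by $z(t)=t\omega$, $t\in[0,1]$, and use the fundamental theorem of calculus:
\[
1\leq |F(0)|=|F(\omega)-F(0)|=\left|\int_0^1 \omega F'(t\omega)\,dt\right|\leq |\omega|\sup_{t\in[0,1]}|F'(t\omega)|.
\]
Since $\cosh$ is increasing on $[0,\infty)$, the supremum over $t\in[0,1]$ is attained at $t=1$, and the derivative estimate from the previous step gives
\[
1\ll \sigma^2 L|\omega|\cosh(\sigma\Imaginary(\omega)),
\]
which is the desired inequality with absolute implied constant.

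The proof is essentially mechanical and I do not anticipate any substantive obstacle: Stein's lemma places $F$ in $B_\sigma$, Boas's inequality promotes the sup-norm bound to every horizontal line, Bernstein's inequality (applied slice by slice) extends the derivative estimate to all of $\C$, and integrating along a straight segment closes the argument. The only minor point worth noting is that Bernstein must be transported off the real axis; this is exactly the role of Boas's cosh-growth bound.
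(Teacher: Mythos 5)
Your proof is correct and follows essentially the same route as the paper: both combine Stein's lemma (to pass from $L^1$ to $L^\infty$), Bernstein's inequality (to bound $F'$), and Boas's inequality (to push the bound off the real line), and then close via a mean-value/FTC estimate along the segment from $0$ to $\omega$. The only cosmetic difference is that the paper applies Bernstein first and then Boas to $F'$, whereas you apply Boas to $F$ on horizontal slices and then Bernstein on each slice; these give the identical bound $|F'(x+iy)|\ll \sigma^2 L\cosh(\sigma y)$. One small point in your favor: the paper invokes ``the mean value theorem'' with an equality $|F'(u\omega)|=|F(\omega)-F(0)|/|\omega|$, which is not literally valid for complex-valued functions, whereas your fundamental-theorem-of-calculus estimate $|F(\omega)-F(0)|\leq |\omega|\sup_{t\in[0,1]}|F'(t\omega)|$ is the rigorous form of that step.
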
	
\begin{proof}
	Suppose $F(\omega)=0$. By the mean value theorem 
		\begin{equation}\label{MVT}
			|F^{\prime}(u\omega)|=\left| \dfrac{F(\omega)-F(0)}{\omega}  \right|\geq \dfrac{1}{|\omega|}
		\end{equation}	
	for some $0\leq u\leq 1$. Combining Proposition \ref{BoasLemma}, Lemma \ref{BernsteinLemma}, and then Lemma \ref{SteinLemma} we have
		\begin{equation}\label{amal}	
			|F^{\prime}(u\omega)|\leq \cosh(\sigma \Imaginary(\omega))\|F^{\prime}\|_{\infty}\leq c\sigma^{2}L\cosh(\sigma \Imaginary(\omega)).
		\end{equation}
	Combining (\ref{MVT}) with (\ref{amal}), we are lead to the inequality
		\[
			1\leq c|\omega|\sigma^{2}L\cosh(\sigma \Imaginary(\omega))
		\]
	where $c>0$ is the constant appearing in Lemma \ref{SteinLemma}.
\end{proof}

      \begin{proof}[Proof of Proposition \ref{zeroFree}]
      		Suppose $C(\omega)=0$ and that $0\in I$. By Corollary \ref{LipschitzCorollary}
			\begin{equation}\label{inequality2}
				1\ll \lp \length(I)+\delta^{-1}\rp(2\pi\delta)^{2}|\omega|\cosh2\pi \delta\Imaginary(\omega).
			\end{equation}
		Combining the fact that $\delta\length(I) \ll1$ and (\ref{inequality2}) yields
			\[
				1\ll 2\pi\delta|\omega|\cosh2\pi \delta|\omega|,
			\]
		it follows that $\delta|\omega|=\Omega(1)$. 
      \end{proof}
      This demonstrates that vanishing doesn't come for free with extremal majorants of $\chi_{I}(t)$ and that forcing vanishing will come at some cost for small values of $\delta$.
       We will now consider how well Selberg type functions, which have the property that they vanish at a prescribed point in the upper half plane, can approximate $\chi_{I}(t)$. If $\delta$ is small, then we know Selberg's function is not admissible and we can use the modifications suggested in the introduction to obtain the upper bounds on $\rho(I;\alpha,\delta)$. We are now in a position to prove the first part of Theorem \ref{rhoTheorem}.

       \begin{proof}[Proof of Theorem \ref{rhoTheorem} (lower bound):]
		  Following the proof of Proposition \ref{zeroFree} we find that
			    \begin{equation}
				 1\ll \rho(I;\alpha,\delta)(2\pi\delta)^{2}|\alpha|\cosh2\pi \delta\Imaginary(\alpha).
			    \end{equation}
		  Because $\cosh(2\pi\delta\Imaginary(\alpha))\sim 1$ when $\delta\ll 1$ we have 
			    \begin{equation}
				  \delta^{-2} \ll_{\alpha}  \rho(I;\alpha,\delta)
			    \end{equation}
		    when $\delta\ll 1$. \newline
		    \indent Now, when $\delta\gg 1$, we know that $\rho(I;\alpha,\delta)\gg \delta^{-1}$ because Selberg's function (while not admissible) satisfies weaker assumptions than admissible functions and has $L^{1}$ distance $\delta^{-1}$ from the characteristic function of $I$.
      \end{proof}

As before, we let $F(z;\alpha,\beta)$ denote the extremal function described in Theorem \ref{interpolateTheorem}. To produce the upper bound in the theorem, we will use the function
	\[
		C_{\alpha}(z)=C(z)\lp1+F(z;\alpha,-1)\rp,
	\]
which is admissible for $\rho(I;\alpha,\delta)$. 

\begin{proof}[Proof of Theorem \ref{rhoTheorem} (upper bound):]
We begin with the observation that 
	\begin{equation}\label{rhoUpper}
		\rho(I;\alpha,2\delta)\leq \dint_{-\infty}^{\infty} C_{\alpha}(t)dt-\length(I).
	\end{equation}
	Now when $\delta\ll 1$, we estimate the right hand side of (\ref{rhoUpper}) by
		\[
			\dint_{-\infty}^{\infty} C_{\alpha}(t)dt\leq \|C\|_{1}(1+\|F\|_{\infty}).
		\]
	 This inequality, combined with (\ref{rhoUpper}), and Lemma \ref{lemmaU}  yields
		\[
			\rho(I;\alpha,2\delta)\leq  (\length(I)+\delta^{-1})\lp1+ \dfrac{8\pi^{2}c\delta \Imaginary(\alpha)}{ \sinh(2\pi \Imaginary(\alpha)\delta)- 2\pi \Imaginary(\alpha)\delta } \rp
		\]
	and as $\delta\ra0$ this reduces to 
		\[
			\rho(I;\alpha,2\delta)\leq O(1)+3(2\pi c +o(1)) \dfrac{4\pi \Imaginary(\alpha)}{(2\pi\Imaginary(\alpha)\delta)^{3}}\ll_{\alpha}\delta^{-3}.
		\]
	In the case when $\delta\gg 1$, we use the uniform estimate
		\[
			F(t;\alpha,-1)\leq \dfrac{4\pi \Imaginary(\alpha)\delta}{ \sinh(2\pi \Imaginary(\alpha)\delta)- 2\pi \Imaginary(\alpha)\delta } \ll_{\alpha} e^{-2\pi \Imaginary(\alpha)\delta}
		\]
	from Lemma \ref{lemmaU}. Combining this with (\ref{rhoUpper}) we have
		\begin{eqnarray*}
			\rho(I;\alpha,2\delta) 
			&\ll_{\alpha}& \lp 1+ e^{-2\pi \Imaginary(\alpha)\delta}\rp\lp\length(I)+\delta^{-1}\rp -\length(I) \\
			&\ll_{\alpha}& \length(I)e^{-2\pi \Imaginary(\alpha)\delta} +\delta^{-1}e^{-2\pi \Imaginary(\alpha)\delta}+\delta^{-1} \\
			&\ll_{\alpha,I}& \delta^{-1}.
		\end{eqnarray*}
\end{proof}

\section{Remarks and generalizations}\label{RandG}

\subsection*{A word on minorants}
In the literature, when speaking of Beurling-Selberg majorants, it is customary to also speak of Beurling-Selberg {\it minorants.} We have decided to say just a few words in this direction, owing to the fact that we have not even determined the extremal majorant of the characteristic function of the interval with a vanishing constraint. That being said, we can construct minorants in the same way that we constructed majorants by modifying Selberg's functions.\newline
\indent Besides the majorant $C(z)$, Selberg constructed an analogous minorant $c(z)$. That is, for each real number $\delta>0$ and an interval $I\subset \R$, he constructed an integrable function $c:\R\ra\R$ that satisfies
	\begin{enumerate}
		\item $c(x)\leq \chi_{I}(x)$,
		\item $\hat{c}(\xi)=0$ whenever $|\xi|>\delta$, and
		\item $\dint_{-\infty}^{\infty}c(x)dx=\mathrm{Length}(I)-\delta^{-1}.$
	\end{enumerate}
The minorant $c(x)$ extends to an entire function of exponential type $2\pi\delta$, like the majorant $C(z)$.  A notable difference between $C(z)$ and $c(z)$ is that $c(z)$ becomes a worse $L^{1}$-approximation than the constant function $f(x)=0$ when $\delta < \length(I)^{-1}$. So the reader may think of $\delta$ as being  large when we are discussing Selberg's minorant.  \newline 
\indent Observe that the function 
	\[
		z \mapsto c(z)-F(z;\alpha,c(\alpha))
	\]
is an entire function of exponential type $2\pi\delta$ that minorizes the characteristic function of $I$ on the real line, and  vanishes at $\alpha$. Similarly, if $F(z;\alpha,1)\leq 1$, then 
	\[
		z\mapsto c(z)\lp 1- F(z;\alpha,1)\rp
	\]
is an entire function of exponential type $4\pi\delta$ that minorizes the characteristic function of $I$ on the real line, and that vanishes at $\alpha$. By Lemma \ref{lemmaU}, the condition that $F(z;\alpha,1)\leq 1$ is satisfied when $\delta\gg1$. It is not difficult, however, to show that $F(z;\alpha,1)> 1$ for small values of $\delta$. The exact value of $\delta$ where the change occurs can be computed in the following way. First write $F(z;\alpha,1)=V(z)V^{*}(z)$ where
	\[
		V(z)=\dfrac{K(\alpha,z)+K(-\alpha,z)}{K(\alpha,\alpha)+K(-\alpha,\alpha)}.
	\]
The expression $|V(t)|\leq 1$ for each $t\in\R$ is equivalent to
	\[
		4 \sinh\pi\delta\Imaginary(\alpha)  \leq  \sinh2\pi \delta\Imaginary(\alpha) +2\pi \delta\Imaginary(\alpha)
	\]
for $\delta,\Imaginary(\alpha)>0$. Equality is obtained when $\pi\delta\Imaginary(\alpha)\approx 1.0295$.


\subsection*{Vanishing at many points}
	In the introduction we mentioned that our method allows for the construction for majorants what vanish at many distinct points. In this section we elaborate on this and explore some of the analytic properties of  (\ref{manyPts}).\newline
	\indent  Given $N$ points $\alpha_{1},...,\alpha_{N}\in\mU$ represented by ${\boldsymbol \alpha}$, we define the following analogue of $\rho(\alpha,I,\delta)$:
	\[
	 	\rho({\boldsymbol \alpha},I,\delta)=\inf \dint_{-\infty}^{\infty} \lb G(t) - \chi_{I}(t)  \rb dt
	\] 
where the infimum is taken over entire functions $G(z)$ that satisfy: 
	\begin{enumerate}[(i)]
		\item $G(t)\geq \chi_{I}(t)$ for all $t\in\R$,
		\item $\hat{G}(\xi)=0$ whenever $|\xi|>\delta$, and
		\item $G(\alpha_{n})=0$ for n=1,2,...,N.
	\end{enumerate}
By using the function defined in (\ref{manyPts}), we can obtain the following bounds on $\rho({\boldsymbol \alpha},I,\delta)$.
\begin{theorem} \label{rhoTheorem2}
	Let $\delta>0$, $N>0$, $\alpha_{1},...,\alpha_{N}\in\mU$ be represented by ${\boldsymbol \alpha}$, and $I\subset \R$ be an interval. Then
		\begin{equation}
			\delta^{-2} \ll \rho({\boldsymbol \alpha},I,\delta) \ll \delta^{-2N-1}
		\end{equation}
	as $\delta\ra 0$, and
		\begin{equation}
			\rho({\boldsymbol \alpha},I,\delta) \approx \delta^{-1}
		\end{equation}
	as $\delta\ra \infty$. The implied constants are effective and depend only on $N$, ${\boldsymbol \alpha}$ and $I$.
\end{theorem}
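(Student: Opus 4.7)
The plan is to combine monotonicity of $\rho$ in the number of vanishing constraints with a direct construction based on (\ref{manyPts}) to obtain both inequalities.

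For the lower bounds, I observe that any function admissible for $\rho(\boldsymbol{\alpha},I,\delta)$ is in particular admissible for $\rho(\alpha_{n},I,\delta)$ for each individual $n$, so
\[
	\rho(\boldsymbol{\alpha},I,\delta) \geq \max_{1\leq n\leq N} \rho(\alpha_{n},I,\delta).
\]
Applying Theorem \ref{rhoTheorem} to any single $\alpha_{n}$ then gives $\rho(\boldsymbol{\alpha},I,\delta) \gg_{\boldsymbol{\alpha}} \delta^{-2}$ as $\delta \to 0$ and $\rho(\boldsymbol{\alpha},I,\delta) \gg_{\boldsymbol{\alpha}} \delta^{-1}$ as $\delta \to \infty$.

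For the upper bounds, I set $\delta' = \delta/(N+1)$ and let $C(z)$ and $F(z;\alpha_{n},-1)$ denote Selberg's majorant and the extremal function of Theorem \ref{interpolateTheorem} built with parameter $\delta'$ in place of $\delta$. Then
\[
	G(z) = C(z) \dprod_{n=1}^{N}\lp 1 + F(z;\alpha_{n},-1) \rp
\]
has Fourier support in $[-(N+1)\delta', (N+1)\delta'] = [-\delta,\delta]$ and satisfies $G(t) \geq \chi_{I}(t)$ on $\R$ (since each $F(t;\alpha_{n},-1) \geq 0$) and $G(\alpha_{n}) = 0$ (since $F(\alpha_{n};\alpha_{n},-1) = -1$), so it is admissible for $\rho(\boldsymbol{\alpha},I,\delta)$. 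Using the elementary inequality $\|fg\|_{1} \leq \|f\|_{1}\|g\|_{\infty}$, I would estimate
\[
	\dint_{-\infty}^{\infty} G(t)\, dt \leq \|C\|_{1}\dprod_{n=1}^{N}\lp 1 + \|F(\cdot;\alpha_{n},-1)\|_{\infty} \rp,
\]
where $\|C\|_{1} = \length(I) + \delta'^{-1} \asymp \length(I) + \delta^{-1}$, and invoke Lemma \ref{lemmaU} (combined with the translation identity $F(z;\alpha,\beta) = F(z-\Real(\alpha); i\Imaginary(\alpha),\beta)$) to bound the sup-norm factors.

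By Lemma \ref{lemmaU}, $\|F(\cdot;\alpha_{n},-1)\|_{\infty}$ behaves like $O_{\alpha_{n}}(\delta^{-2})$ as $\delta \to 0$ and like $O_{\alpha_{n}}(\delta\, e^{-c_{n}\delta})$ for some $c_{n}>0$ as $\delta \to \infty$. In the small $\delta$ regime the product of $N$ such factors is $O_{\boldsymbol{\alpha}}(\delta^{-2N})$, and multiplying by $\|C\|_{1}=O(\delta^{-1})$ yields $\rho(\boldsymbol{\alpha},I,\delta) \ll_{\boldsymbol{\alpha},I}\delta^{-(2N+1)}$. In the large $\delta$ regime, $\dprod_{n}(1 + O(e^{-c_{n}\delta})) = 1 + O(e^{-c\delta})$ with $c=\min_{n}c_{n}$, and combined with $\|C\|_{1}=\length(I)+O(\delta^{-1})$ this gives
\[
	\dint G(t)\, dt - \length(I) \leq \length(I)\cdot O(e^{-c\delta}) + O(\delta^{-1})\lp 1+O(e^{-c\delta})\rp \ll_{\boldsymbol{\alpha},I}\delta^{-1}.
\]
The only real work is careful bookkeeping: tracking the $N+1$ factors of type $\delta'$ and confirming that the exponent $2N+1$ in the small-$\delta$ bound emerges exactly from the cubic vanishing of $\sinh(2\pi r\delta')-2\pi r\delta'$ at the origin appearing in Lemma \ref{lemmaU}.
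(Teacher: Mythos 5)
Your proof is correct and follows essentially the same strategy that the paper outlines: use the product construction $C\cdot\prod_{n}(1+F(\cdot;\alpha_{n},-1))$ with a rescaling of parameters to control the exponential type, bound its $L^{1}$ norm via $\|fg\|_{1}\leq\|f\|_{1}\|g\|_{\infty}$ together with the sup-norm estimates of Lemma \ref{lemmaU}, and deduce the lower bound from the single-point case by observing that vanishing at $\boldsymbol{\alpha}$ implies vanishing at any one $\alpha_{n}$. The only cosmetic difference is bookkeeping: the paper scales the \emph{arguments} ($z\mapsto z/N$, $\alpha_{n}\mapsto\alpha_{n}/N$) so that the product has type $2\pi\delta$ and the full majorant $C_{\boldsymbol{\alpha}}$ has type $4\pi\delta$, admissible for $\rho(\boldsymbol{\alpha},I,2\delta)$, whereas you rescale the bandwidth parameter to $\delta'=\delta/(N+1)$, producing a function that is admissible for $\rho(\boldsymbol{\alpha},I,\delta)$ directly; either way the resulting asymptotics in $\delta$ are identical up to constants.
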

 
	\begin{equation}
	      G_{\boldsymbol \alpha}(z)=\dprod_{n=1}^{N}\left( 1+F(z/N;\alpha_{n}/N,-1)  \right).
	\end{equation}
It is easy to see that $G_{\boldsymbol \alpha}(z)$ is an entire function of exponential type  $2\pi\delta$ such that $G_{\boldsymbol \alpha}(x)\geq1$ on the real axis and $G_{\boldsymbol \alpha}(\alpha_{n})=0$ for each $n=1,...,N$. 
Now observe that the following modification of Selberg's majorant 
	\begin{equation}
	      C_{\boldsymbol \alpha}(z)=C(z)G_{\boldsymbol \alpha}(z)
	\end{equation}
has exponential type $4\pi\delta$, $C_{\boldsymbol \alpha}(x)\geq C(x)$ for all real $x$, and $C_{\boldsymbol \alpha}(\alpha_{n})=0$ for each $n=1,...,N$. 
\begin{remark}
 Instead of giving weight $1/N$ to each function depending on $\alpha_{\ell}$, we could instead take a different convex combination using the functions $F(\lambda_{n}z;\lambda_{n}\alpha_{n},-1)$ where $\lambda_{1},...,\lambda_{N}>0$ and $\lambda_{1}+\cdots+\lambda_{N}=1$.
\end{remark}

\begin{lemma}  Let $\varphi\in L^{1}\cap E_{\delta}$ and define $\varphi_{\balpha}(x)=\varphi(x)G^{+}_{\balpha}(x)$. There exists a constant $c=c(\balpha,N)>0$ such that
	\[
		| \hat{\varphi_{\balpha}}(\xi)|\leq|\hat{\varphi}(\xi)|\lp 1+c(\balpha,N) \lp \exp\lb -\pi \ds\min_{\ell}\lb\Imaginary(\alpha_{\ell})\rb\delta/N^{2}  \rb\rp\rp
	\]
as $\delta\ra \infty$ and 
	\[
		|\hat{\varphi_{\alpha}}(0)|\ll \|\varphi\|_{1} \delta^{-2N}
	\]
as $\delta\ra0$.
The implied constants depend on $a,b,$ and $\boldsymbol \alpha$.
\end{lemma}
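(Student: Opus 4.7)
The plan is to decompose $G^+_{\balpha}$ additively around its leading term $1$, and then treat the Fourier transform of the product term by term. Expanding,
\begin{equation*}
G^+_{\balpha}(x) = 1 + R(x), \qquad R(x) = \dsum_{\emptyset \neq S \subseteq \{1,\dots,N\}} \dprod_{n \in S} F(x/N;\alpha_n/N,-1),
\end{equation*}
so that $\varphi_{\balpha} = \varphi + \varphi R$ and $\widehat{\varphi_{\balpha}}(\xi) = \widehat{\varphi}(\xi) + \widehat{\varphi R}(\xi)$. Everything reduces to estimating $\widehat{\varphi R}$ in the two regimes. Each factor $F(\cdot;\alpha_n/N,-1)$ has exponential type at most $2\pi\delta/N$, so products of up to $N$ such factors have type at most $2\pi\delta$; consequently $\widehat{R}$ is supported in a bounded interval determined by $\delta$, a fact exploited in the high-$\delta$ step.

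For the small-$\delta$ bound at $\xi=0$, the argument is direct: we write
\begin{equation*}
|\widehat{\varphi_{\balpha}}(0)| = \left|\dint_{-\infty}^{\infty}\varphi(x)\,G^+_{\balpha}(x)\,dx\right| \leq \|\varphi\|_1\,\|G^+_{\balpha}\|_\infty.
\end{equation*}
Applying Lemma \ref{lemmaU}(iii) with $r=\Imaginary(\alpha_n)/N$ and using $\sinh(2\pi r\delta)-2\pi r\delta \sim (2\pi r\delta)^3/6$ as $\delta\to 0$ gives $\|F(\cdot;\alpha_n/N,-1)\|_\infty\ll_{\balpha,N}\delta^{-2}$ for each $n$. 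Multiplying over the $N$ factors yields $\|G^+_{\balpha}\|_\infty \ll_{\balpha,N} \delta^{-2N}$, producing the claimed estimate.

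For the large-$\delta$ bound, the key quantitative input is again Lemma \ref{lemmaU}(iii), this time in the opposite extreme: with $r=\Imaginary(\alpha_n)/N$ and $\delta\to\infty$, $\sinh(2\pi r\delta)-2\pi r\delta \sim \tfrac12 e^{2\pi r\delta}$, so $\|F(\cdot;\alpha_n/N,-1)\|_\infty \ll_{\balpha,N} \delta\, e^{-2\pi\Imaginary(\alpha_n)\delta/N}$. The single-factor terms dominate the sum defining $R$, so $\|R\|_\infty \ll_{\balpha,N} \exp\{-\pi\min_{\ell}\Imaginary(\alpha_\ell)\delta/N^2\}$, with room to spare. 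To promote sup-norm smallness of $R$ to the multiplicative estimate on the Fourier side, I would express $\widehat{\varphi R}=\widehat{\varphi}\ast\widehat{R}$, use that both $\widehat{\varphi}$ (via Paley--Wiener applied to $\varphi\in L^1\cap E_\delta$) and $\widehat{R}$ have compact support, and dominate the convolution pointwise in terms of $|\widehat{\varphi}(\xi)|$.

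The main obstacle is exactly this last passage: the naive $L^1$--$L^\infty$ duality only delivers the additive correction $\|\varphi\|_1\|R\|_\infty$, whereas the lemma demands a multiplicative factor in front of $|\widehat{\varphi}(\xi)|$. Closing the gap requires a quantitative comparison of $\widehat{\varphi}(\xi-\eta)$ with $\widehat{\varphi}(\xi)$ for $\eta$ in the (bounded) support of $\widehat{R}$, or else a Cauchy--Schwarz/Plancherel argument that weights $\widehat{R}$ against $\widehat{\varphi}$; either way, the small sup-norm of $R$ feeds through to an exponentially small multiplicative correction, yielding the stated bound.
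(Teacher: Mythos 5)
Your decomposition $G^{+}_{\balpha}=1+R$, your use of Lemma~\ref{lemmaU}(iii) to control $\|F(\cdot/N;\alpha_n/N,-1)\|_{\infty}$ in both regimes, and your asymptotic analysis of $\sinh(2\pi r\delta)-2\pi r\delta$ are all the same inputs the paper uses; the small-$\delta$ estimate you give ($|\widehat{\varphi_{\balpha}}(0)|\leq\|\varphi\|_{1}\|G^{+}_{\balpha}\|_{\infty}\ll\|\varphi\|_{1}\delta^{-2N}$) agrees with the paper's, which merely expands the product into $2^{N}-1$ terms and estimates each rather than bounding the sup norm of the whole product at once. For large $\delta$, your bound $\|R\|_{\infty}\ll_{N,\balpha}\exp\{-\pi\min_{\ell}\Imaginary(\alpha_{\ell})\delta/N^{2}\}$ is also exactly what the paper obtains.

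The gap you flag at the end is real, and it is present in the paper's own proof as well. The triangle/H\"older argument gives
\[
|\widehat{\varphi_{\balpha}}(\xi)|\leq|\hat{\varphi}(\xi)|+\|R\|_{\infty}\|\varphi\|_{1},
\]
an \emph{additive} correction with coefficient $\|\varphi\|_{1}$. The paper invokes ``$|\hat{\varphi}(\xi)|\leq|\hat{\varphi}(0)|$'' (which already tacitly requires $\varphi$ to have a fixed sign so that $\hat{\varphi}(0)=\|\varphi\|_{1}$), but that inequality points the wrong way: converting the additive bound into the claimed multiplicative bound $|\widehat{\varphi_{\balpha}}(\xi)|\leq|\hat{\varphi}(\xi)|(1+c\,e^{-\dots})$ would require $\|\varphi\|_{1}\leq|\hat{\varphi}(\xi)|$, which fails whenever $\hat{\varphi}(\xi)$ is small. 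So the lemma as stated is not justified by the argument given. What the argument \emph{does} prove is the weaker $|\widehat{\varphi_{\balpha}}(\xi)|\leq|\hat{\varphi}(0)|(1+c(\balpha,N)e^{-\pi\min_{\ell}\Imaginary(\alpha_{\ell})\delta/N^{2}})$ (or, equivalently, $|\widehat{\varphi_{\balpha}}(\xi)-\hat{\varphi}(\xi)|\ll\|\varphi\|_{1}e^{-\dots}$), and your proposal reaches exactly that point. In short: you followed the paper's route correctly and located a genuine soft spot in its proof; the fix is to weaken the conclusion to the additive form rather than to search for an argument that upgrades it to the pointwise multiplicative one, which is false without extra hypotheses on $\hat{\varphi}$.
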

\begin{proof}
By Lemma \ref{lemmaU} {\it (v)}, we have
	\[
		|1-G_{\balpha}(x)| \ll_{N,\balpha} \exp\lb -\pi \ds\min_{\ell}\lb\Imaginary(\alpha_{\ell})\rb\delta/N^{2}  \rb  
	\]
and using the fact that $|\hat\varphi(\xi)|\leq |\hat{\varphi}(0)|$ we have
	\[
		 \hat{\varphi_{\balpha}}(\xi)=\hat{\varphi}(\xi)\lp 1+O_{N,\balpha} \lp \exp\lb -\pi \ds\min_{\ell}\lb\Imaginary(\alpha_{\ell})\rb\delta/N^{2}  \rb\rp\rp. 
	\]
Now when $\delta\ra 0$ we have

      \begin{eqnarray}
	      & & \dint_{-\infty}^{\infty}\varphi_{\boldsymbol \alpha}(x)dx \\ \nonumber
	      &=& \dint_{-\infty}^{\infty}\varphi(x)dx\\ \nonumber
	      & & +\dsum_{ i_{1}<\cdots<i_{k}}\dint_{-\infty}^{\infty}\varphi(x)F(x/N;\alpha_{i_{1}}/N,-1)\cdots F(x/N;\alpha_{i_{k}}/N,-1)dx \\ \nonumber
	      &\leq& \hat{\varphi}(0)+\|\varphi\|_{1}\dsum_{ i_{1}<\cdots<i_{k}}\dprod_{\ell=1}^{k}\|F(x/N;\alpha_{i_{\ell}}/N,-1)\|_{\infty}. \label{expanded}
      \end{eqnarray}
Lemma \ref{lemmaU}  gives
	\begin{eqnarray*}
		\|F(x/N;\alpha_{i_{\ell}}/N,-1)\|_{\infty} 
		&\leq & \|F(x;\alpha_{i_{\ell}},-1)\|_{\infty} \\
		&\leq& \dfrac{4\pi\delta\Imaginary(\alpha_{i_{\ell}})}{K(\alpha_{\ell},\alpha_{\ell})-\delta}, 
	\end{eqnarray*}
	and by writing 
	\[
		K(\alpha_{\ell},\alpha_{\ell})=\delta+\dsum_{n=2}^{\infty}\dfrac{(2\pi \Imaginary(\alpha_{\ell}))^{2n-2}}{(2n-2)!}\delta^{2n-1}
	\]
	we find that
	\[
		\|F(x/N;\alpha_{i_{\ell}}/N,-1)\|_{\infty} \leq \dfrac{2}{2\pi\Imaginary(\alpha_{i_{\ell}})\delta^{2}},
	\]
	which combined with (\ref{expanded}) gives
	\begin{equation}\label{integrandExpanded}
		\dint_{-\infty}^{\infty}\varphi_{\boldsymbol \alpha}(x)dx 
		\leq
		\hat{\varphi}(0)+N\|\varphi\|_{1}\dsum_{ i_{1}<\cdots<i_{k}}\dprod_{\ell=1}^{k}\dfrac{2}{2\pi\Imaginary(\alpha_{i_{\ell}})\delta^{2}}.
	\end{equation}

It follows that
	\[
		\dint_{-\infty}^{\infty}\varphi_{\boldsymbol \alpha}(x)dx\leq \hat{\varphi}(0)+\|\varphi\|_{1}O(\delta^{-2N})\;\;\;\text{ as }\delta\ra0
	\]
where the implied constant depends on $N$ and $\boldsymbol \alpha$. 
\end{proof}

\subsection{de Branges Spaces}In this section we show how Theorem \ref{interpolateTheorem} can be generalized so that the minimization occurs in a fairly general {\it de Branges space}. A Hilbert space $H$, which is non-trivial and whose elements are {\it entire functions}, is called a {\it de Branges space} if
\begin{enumerate}[(i)]
\item $F\in H$ and $\omega$ is a non-real zero of $F$, then $(z-\ol{\omega})F(z)/(z-\omega)\in H$ and has the same norm as $F$, 
\item $F\in H$ implies $F^{*}\in H$ and has the same norm as $F$, and 
\item for every $\omega\in\C$, then functional $F\mapsto F(\omega)$ is continuous. 
\end{enumerate}
It is a fundamental theorem of de Branges \cite{deBranges} that to each space $H$ there exists an entire function $E(z)$ satisfying the elementary inequality
	\begin{equation}\label{deBrangesCondition}
		|E(\ol{z})|<|E(z)| \;\;\;\;\;\text{ for each }z\in\mU
	\end{equation}
such that the Hilbert space whose elements come from $H$, but whose inner product is given by
	\[
		\al F,G\ar_{E}=\dint_{-\infty}^{\infty} F(t)\ol{G(t)}\dfrac{dt}{|E(t)|^2}
	\]
with induced norm $\|\cdot\|_{E}$, is isometric to $H$. Following \cite{Lagarias}, we will call this function a {\it de Branges function} and we will say that it is {\it strict} if it has no zeros on the real axis. Condition (iii) implies that a de Branges space is a reproducing kernel Hilbert space. We will let $K_{E}(\omega,z)$ denote the corresponding reproducing kernel, which is given by the formula \cite[Theorem 19]{deBranges}
	\begin{equation}\label{deBrangesRepro}
		K_{E}(\omega,z)=\dfrac{B(z)\ol{A(\omega)}-A(z)\ol{B(\omega)}}{\pi(z-\ol{\omega})},
	\end{equation}
where $A(z)=(1/2)(E(z)+E^{*}(z))$ and $B(z)=(i/2)(E(z)-E^{*}(z))$. \newline
\indent Conversely, given an entire function satisfying (\ref{deBrangesCondition}), there exists a de Branges space $H_{E}$ whose elements $F(z)$ satisfy
\begin{enumerate}[(i)]
\item $\| F\|_{E}<\infty$, and 
\item  $F(z)/E(z)$ and $F^{*}(z)/E(z)$ are of {\it bounded type} and non-positive {\it mean type} in $\mU$.
\end{enumerate}
 A function $g(z)$ which is analytic in $\mU$ is said to be of {\it bounded type} in $\mU$ if it can be expressed as the quotient of bounded analytic functions in $\mU$. The {\it mean type} of a function $g(z)$ of bounded type in $\mU$ is the number 
	\[
		\nu(g)=\displaystyle\limsup_{y\ra\infty}y^{-1}\log |g(iy)|
	\]
if $g(z)$ is not identically zero and $-\infty$ if $g\equiv 0$. We can now formulate a generalization of Theorem \ref{interpolateTheorem} for de Branges spaces.

 \begin{theorem} \label{deBrangesInterpolate}
            Let $\alpha\in\mU$, $\beta\in\C$, and $E(z)$ be a de Branges function that is of bounded type in $\mU$. Assume in addition that $K_{E}(\alpha,z)$ and $K_{E}(\ol{\alpha},z)$ are linearly independent.   If $F(z)$ is an entire function of exponential type at most $2\nu(E)$ satisfying
              \begin{enumerate}
                    \item $F(x)\geq0$ for real $x$, and
              \item $F(\alpha)=\beta$,
              \end{enumerate}
            then
              \begin{equation} \label{extrem}
                \dfrac{|\beta|K_{E}(\alpha,\alpha)-\Real(\beta K_{E}(\alpha,\overline{\alpha}))}{K_{E}(\alpha,\alpha)^{2}-|K_{E}(\alpha,\overline{\alpha})|^{2}}\leq \dfrac{1}{2}\dint_{-\infty}^{\infty}F(x)|E(x)|^{-2}dx.
              \end{equation}
            Equality occurs in (\ref{extrem}) if and only if $F(z)=U(z)U^{*}(z)$, where
              \begin{equation}\label{U}
              U(z)= \lambda_{1} K_{E}(\alpha,z) + \lambda_{2}K_{E}(\overline{\alpha},z).
              \end{equation}
            The coefficients $\lambda_{1}$ and $\lambda_{2}$ are given by
              \begin{equation*}
              \lambda_{1}=\dfrac{\gamma\beta K_{E}(\alpha,\alpha)-|K_{E}(\alpha,\overline{\alpha})|}{K_{E}(\alpha,\alpha)^{2}-|K_{E}(\alpha,\overline{\alpha})|^{2}} \;
              \text{ and } \;
                \lambda_{2}=\dfrac{K_{E}(\alpha,\alpha)-\beta K_{E}(\alpha,\overline{\alpha})}{K_{E}(\alpha,\alpha)^{2}-|K_{E}(\alpha,\overline{\alpha})|^{2}}
              \end{equation*}
        where $\gamma=K_{E}(\alpha,\overline{\alpha})|K_{E}(\alpha,\overline{\alpha})|^{-1}$.
      \end{theorem}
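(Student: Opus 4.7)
The plan is to mirror the four-step strategy used in the proof of Theorem \ref{interpolateTheorem}, but with the Paley--Wiener space ${\bf H}_{\delta/2}$ replaced by the de Branges space $H_{E}$ and the classical kernel $K(\omega,z)$ replaced by the de Branges reproducing kernel $K_{E}(\omega,z)$. Everything should go through cleanly provided one can establish a suitable Fejér factorization in $H_{E}$; this is where the bounded-type hypothesis on $E(z)$ will enter.

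First, I would establish a de Branges analogue of Proposition \ref{fejerThm}: any entire function $F(z)$ of exponential type at most $2\nu(E)$ that is non-negative on $\R$ and for which $\int F(x)|E(x)|^{-2}dx$ is finite admits a factorization $F(z)=U(z)U^{*}(z)$, where $U(z)\in H_{E}$ is zero-free in $\mU$. The construction parallels Section \ref{interpolateSection}: list the zeros $\{\omega_{n}\}$ of $F$ in $\mU$, form the partial Blaschke products
\[
B_{N}(z)=\dprod_{n=1}^{N}\dfrac{1-z/\ol{\omega_{n}}}{1-z/\omega_{n}},
\]
set $F_{N}(z)=B_{N}(z)F(z)$, and extract a weak limit $G(z)$ in $H_{E}$. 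Axiom (i) of a de Branges space gives $\|F_{N}\|_{E}=\|F\|_{E}$, axiom (iii) provides the reproducing-kernel structure required for Banach--Alaoglu plus pointwise convergence, and $|B_{N}(z)|\geq 1$ on $\mU$ forces $G$ to be zero-free there. One then checks that $|G(t)|=|F(t)|$ on $\R$, so $F^{2}=GG^{*}$, the non-real zeros of $G$ have even multiplicity, and $U(z)$ is obtained as a square root.

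Second, once the factorization is in hand, the right-hand side of (\ref{extrem}) becomes $\tfrac12\|U\|_{E}^{2}$, and the interpolation condition $F(\alpha)=\beta$ becomes, via the reproducing property and the identity $U^{*}(\alpha)=\ol{U(\ol{\alpha})}$,
\[
\al U,K_{E}(\alpha,\cdot)\ar_{E}\,\ol{\al U,K_{E}(\ol{\alpha},\cdot)\ar_{E}}=\beta.
\]
Third, I would apply Lemma \ref{extremalHilbert} inside $H_{E}$ with $\bu=K_{E}(\alpha,\cdot)$ and $\bv=K_{E}(\ol{\alpha},\cdot)$. These are linearly independent by hypothesis, and the reproducing property combined with the symmetry $K_{E}(\ol{\alpha},\ol{\alpha})=K_{E}(\alpha,\alpha)$ (which follows from the explicit formula (\ref{deBrangesRepro}) using $A=A^{*}$, $B=B^{*}$) gives $\eta=\|\bu\|_{E}\|\bv\|_{E}=K_{E}(\alpha,\alpha)$ and $\nu=\al \bu,\bv\ar_{E}=K_{E}(\ol{\alpha},\alpha)$. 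Inserting these into the statement of Lemma \ref{extremalHilbert} and rearranging produces the sharp lower bound (\ref{extrem}) and identifies the unique extremizer as $U(z)=\lambda_{1}K_{E}(\alpha,z)+\lambda_{2}K_{E}(\ol{\alpha},z)$ with the stated constants.

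The main obstacle is the first step. The classical Paley--Wiener proof of Proposition \ref{fejerThm} used that $F\in L^{1}\cap L^{\infty}\cap E_{2\pi\delta}$ places $F$ in ${\bf H}_{\delta}$ automatically; in the de Branges setting the analogous statement requires the bounded-type hypothesis on $E(z)$. One must verify that the weak limit $G$ produced by the Blaschke procedure not only satisfies $\|G\|_{E}\leq\|F\|_{E}$ but also that $G/E$ and $G^{*}/E$ are of bounded type and of non-positive mean type in $\mU$; the same must then be transferred to its square root $U$. This is precisely where the assumption that $E$ is of bounded type in $\mU$ is essential: it ensures that the Blaschke factors and the extracted square root inherit the factorization of bounded-type functions in $\mU$, so that $U\in H_{E}$. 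Once this technical point is settled, the remainder of the argument is a direct transcription of the proof of Theorem \ref{interpolateTheorem}.
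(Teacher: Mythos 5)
Your overall plan coincides with the paper's: factor the admissible $F$ as $F = UU^{*}$ with $U \in H_{E}$, use the reproducing property to rewrite $F(\alpha)=\beta$ as a bilinear condition on inner products, and apply Lemma~\ref{extremalHilbert} with $\bu = K_{E}(\alpha,\cdot)$ and $\bv = K_{E}(\ol{\alpha},\cdot)$. Steps two and three of your argument are exactly what the paper does.

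Where you diverge is the factorization step. The paper does not reprove it; it invokes the proof of Theorem~15 of \cite{HV}, which already furnishes the needed fact that an entire $F$ of exponential type at most $2\nu(E)$, non-negative on $\R$, with $\int F(x)|E(x)|^{-2}\,dx < \infty$, factors as $UU^{*}$ with $U \in H_{E}$. You instead attempt to rerun the Blaschke--Banach--Alaoglu argument of Proposition~\ref{fejerThm} directly in $H_{E}$, and this has a gap. Invoking axiom~(i) to assert $\|F_{N}\|_{E}=\|F\|_{E}$ presupposes $F\in H_{E}$; but $F$ has exponential type up to $2\nu(E)$, while members of $H_{E}$ have type at most $\nu(E)$, and $\|F\|_{E}^{2}=\int|F(x)|^{2}|E(x)|^{-2}\,dx$ is not one of the hypotheses. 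The natural ambient space for the deflation procedure is $H_{E^{2}}$, but membership of $F$ there requires $\int|F(x)|^{2}|E(x)|^{-4}\,dx<\infty$, which is strictly stronger than the stated $\int F(x)|E(x)|^{-2}\,dx<\infty$ unless one first shows $F(x)|E(x)|^{-2}$ is bounded on $\R$. In the Paley--Wiener case that boundedness is exactly what Lemma~\ref{SteinLemma} gives; in the de Branges setting you would need, and do not supply, an analogue. Noting that the bounded-type hypothesis on $E$ ``is essential'' at this juncture is correct intuition, but it is not a proof. As written, step one does not establish the factorization; you should either prove a de Branges analogue of Stein's lemma or simply cite \cite{HV} as the paper does.
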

      \begin{proof}
      	The proof of Theorem 15 of \cite{HV} shows the existence of a $U(z)\in H_{E}$ such that $F(z)=U(z)U^{*}(z)$. The result follows from Theorem \ref{extremalHilbert} by taking $\bu=K_{E}(\alpha,z)$ and $\bv=K_{E}(\ol{\alpha},z)$. 
      \end{proof}
The condition that $K_{E}(\alpha,z)$ and $K_{E}(\ol{\alpha},z)$ are linearly independent is necessary because of examples such as $E(z)=z+i$. Using (\ref{deBrangesRepro}), we let $A(z)=(1/2)(E(z)+E^{*}(z))=z$, $B(z)=(i/2)(E(z)-E^{*}(z))=-1$, and
	    \[
	    	K_{E}(\alpha,z)=\dfrac{B(z)\ol{A(\alpha)}-A(z)\ol{B(\alpha)}}{\pi(z-\ol{\alpha})}=\dfrac{z-\ol{\alpha}}{\pi(z-\ol{\alpha})}=\pi^{-1}.
	   \]
It follows that $K_{E}(\alpha,z)=K_{E}(\beta,z)$ for every $\alpha,\beta\in\C$. 
In fact $H_{E}\cong\C$ as a Hilbert space.\newline
\indent The following theorem shows that the only obstruction to $K_{E}(\alpha,z)$ begin linearly independent to $K_{E}(\ol{\alpha},z)$ is realized in $E(z)=z+i$. 

	    \begin{theorem} \label{linLemma}
		  Let $E(z)$ be a de Branges function that is of bounded type in $\mU$ and $\alpha\in\mU$. The functions $K_{E}(\alpha,z)$ and $K_{E}(\ol{\alpha},z)$ are linearly independent in $H_{E}$ if, and only if, either of the following conditions hold:
		  \begin{enumerate}
		   \item $E(z)$ has positive mean type, $\nu(E)>0$;
		   \item $E(z)$ has more than one non-real zero.
		  \end{enumerate}
	    \end{theorem}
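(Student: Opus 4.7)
The plan is to reduce the linear dependence of $K_E(\alpha,z)$ and $K_E(\ol\alpha,z)$ to a concrete factorization $E=R\cdot E_1$ with $R$ real entire and $E_1$ a degree one polynomial, and then to translate that factorization into the negations of (1) and (2). The starting observation is the identity
\[
K_E^*(\alpha,z)=K_E(\ol\alpha,z),
\]
which follows from $K_E(\omega,z)=\ol{K_E(z,\omega)}$ together with $A^*=A$ and $B^*=B$ in (\ref{deBrangesRepro}). Linear dependence of $K_E(\alpha,\cdot)$ and $K_E(\ol\alpha,\cdot)$ is therefore equivalent to $K_E(\alpha,z)=\lambda R(z)$ for some $\lambda\in\C\setminus\{0\}$ and some real entire $R$.

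Granting such a representation, I would multiply through by $\pi(z-\ol\alpha)$ and apply the $*$-operation to obtain the pair
\begin{align*}
B(z)A(\ol\alpha)-A(z)B(\ol\alpha) &= \lambda\pi(z-\ol\alpha)R(z), \\
B(z)A(\alpha)-A(z)B(\alpha) &= \ol\lambda\pi(z-\alpha)R(z).
\end{align*}
This is a $2\times 2$ linear system in $(A(z),B(z))$ whose determinant is $-\pi(\alpha-\ol\alpha)K_E(\alpha,\alpha)\ne 0$, so both $A$ and $B$ must factor as $R$ times a polynomial of degree one, giving $E=A-iB=R\cdot E_1$ with $E_1$ linear. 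For the converse, a direct calculation yields $K_E(\alpha,z)=R(\ol\alpha)R(z)K_{E_1}(\alpha,z)$; writing $E_1(z)=az+b$ and expanding (\ref{deBrangesRepro}) shows $K_{E_1}(\alpha,z)$ collapses to the constant $\Imaginary(\ol a b)/\pi$, so $K_E(\alpha,z)$ is a scalar multiple of the real entire function $R$. Thus linear dependence is \emph{equivalent} to a factorization $E=R\cdot E_1$ with $R$ real entire and $E_1$ linear.

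To close the argument, I would show that such a factorization exists precisely when both (1) and (2) fail. For the non-real-zero count: $E$ has no zeros in $\mU$ by the strict de Branges inequality, and the zeros of the real entire $R$ are symmetric about $\R$, so a non-real zero of $R$ would force $E$ to vanish in $\mU$, a contradiction. Hence $R$ has no non-real zeros, and the non-real zeros of $E$ come entirely from $E_1$, of which there is at most one. For the mean type, since $E_1$ is a polynomial, $\nu(E)=\nu(R)$. The main obstacle is therefore the structural claim $\nu(R)\le 0$: here $R$ is a real entire function of bounded type in $\mU$ whose zeros are all real, and I would invoke Hadamard's factorization together with the bounded type hypothesis (in the case where $E$ has no real zeros, $R$ has no zeros at all and reduces to $e^{az+b}$ with $a,b\in\R$) to conclude $\nu(R)=0$. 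The converse direction is constructive: if $E$ has at most one non-real zero $\rho$ and $\nu(E)\le 0$, I would set $E_1(z)=z-\rho$ (or any linear $E_1$ if $\rho$ does not exist) and verify that $R=E/E_1$ is real entire with $\nu(R)\le 0$.
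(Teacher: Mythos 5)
Your reduction to the factorization $E = R\cdot E_1$ with $R$ real entire and $E_1$ of degree at most one is sound and is essentially the paper's argument in disguise: the identity $K_E^*(\alpha,\cdot)=K_E(\ol\alpha,\cdot)$ and the $2\times2$ system in $(A,B)$ are exactly what underlie Lemma~\ref{strictFactorLemma}, where the paper phrases the same step as linear dependence of $A, zA, B, zB$. So up to that point you are reproducing the paper, just organized differently (you fold the real-zero factor $S$ from Lemma~\ref{removeRealZerosLemma} into your $R$, whereas the paper removes it at the start).

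The gap is in passing from the factorization to conditions (1) and (2). You reduce the forward direction to the claim ``$\nu(R)\le 0$ for $R$ real entire of bounded type with only real zeros,'' and your sketch only treats the zero-free case $R=e^{az+b}$, $a,b\in\R$. The claim is in general \emph{false} once $R$ is allowed real zeros: $R(z)=\cos z$ is real entire, of bounded type in $\mU$, has only real zeros, yet $\nu(\cos)=1>0$. Hadamard plus ``bounded type'' alone does not give you what you need; you would have to use the Pólya-class structure of the de Branges function $E$ itself (this is precisely what Lemma~\ref{polyaLemma} supplies in the paper), and even then one must be careful, because the mean type $\nu(E)$ is \emph{not} invariant under stripping off a real-zero factor as in Lemma~\ref{removeRealZerosLemma} (the example $E(z)=\cos(z)\,(z+i)$, with $\nu(E)=1$ yet $H_E$ one-dimensional, shows the subtlety --- and indeed suggests the statement as printed should be read with $E$ strict or with $\nu$ computed for the strict part). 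The converse direction has a parallel hole: you assert without proof that $R=E/(z-\rho)$ is real entire, which again requires the Hermite--Biehler/Pólya structure of $E$ (so that the non-real part of the zero set together with the correct exponential factor can be separated as a genuine $E_1$), not merely knowledge of $E$'s zero locations and mean type. In short: the linear-algebra half of your argument matches the paper, but the analytic half needs Pólya's factorization theorem, which you neither state nor invoke, and without it the key inequality $\nu(R)\le 0$ is simply untrue as stated.
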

\noindent  To prove Theorem \ref{linLemma}, we will need the following lemmas.
\begin{lemma}\label{removeRealZerosLemma}
Suppose $E(z)$ is a de Branges function, $S(z)$ is a real entire function with only real zeros, and $\alpha\in\mU$. The function $\tilde{E}(z)=E(z)S(z)$ is a de Branges structure function. There exists a constant $c\neq 0$ such that 
	\[
		K_{\tilde{E}}(\alpha,z)=c K_{\tilde{E}}(\ol{\alpha},z)
	\]
if, and only if, 
	\[
		K_{E}(\alpha,z)=c\dfrac{S(\alpha)}{S(\ol{\alpha})}K_{E}(\ol{\alpha},z).
	\]	
\end{lemma}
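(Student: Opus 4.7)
The strategy is to reduce the statement to a single explicit identity relating $K_{\tilde E}$ and $K_E$, after which the biconditional becomes a one-line algebraic manipulation.

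First I would verify that $\tilde E(z)=E(z)S(z)$ satisfies the de~Branges inequality (\ref{deBrangesCondition}). Because $S$ is real entire, $S^{*}=S$, and therefore $|S(\bar z)|=|S(z)|$ for every $z\in\C$; moreover, since all zeros of $S$ are real, $S(z)\neq 0$ for $z\in\mU$. Multiplying the strict inequality $|E(\bar z)|<|E(z)|$ by the positive factor $|S(z)|=|S(\bar z)|$ yields $|\tilde E(\bar z)|<|\tilde E(z)|$ on $\mU$, confirming that $\tilde E$ is a de~Branges function.

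Next I would compute the reproducing kernel of $H_{\tilde E}$ in terms of that of $H_E$. From $\tilde E^{*}=E^{*}S^{*}=E^{*}S$ one obtains
\[
\tilde A(z)=\tfrac{1}{2}\bigl(\tilde E(z)+\tilde E^{*}(z)\bigr)=S(z)A(z),\qquad
\tilde B(z)=\tfrac{i}{2}\bigl(\tilde E(z)-\tilde E^{*}(z)\bigr)=S(z)B(z),
\]
so substituting into the formula (\ref{deBrangesRepro}) and using $\overline{S(\omega)}=S(\bar\omega)$ (again a consequence of $S^{*}=S$) gives the key identity
\[
K_{\tilde E}(\omega,z)=S(z)\,S(\bar\omega)\,K_E(\omega,z).
\]

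With this identity in hand, specializing $\omega=\alpha$ and $\omega=\bar\alpha$ turns the relation $K_{\tilde E}(\alpha,z)=c\,K_{\tilde E}(\bar\alpha,z)$ into
\[
S(z)\,S(\bar\alpha)\,K_E(\alpha,z)=c\,S(z)\,S(\alpha)\,K_E(\bar\alpha,z).
\]
Since $\alpha,\bar\alpha\in\C\setminus\R$ and $S$ has only real zeros, both $S(\alpha)$ and $S(\bar\alpha)$ are nonzero. The common factor $S(z)$ can be cancelled by the identity theorem (the remaining two entire functions agree off the isolated zero set of $S$), and dividing by $S(\bar\alpha)$ produces the asserted equivalent condition $K_E(\alpha,z)=c\,(S(\alpha)/S(\bar\alpha))\,K_E(\bar\alpha,z)$. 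The only step requiring any thought is the derivation of the kernel identity, which is a bookkeeping computation once one observes that $S^{*}=S$ lets all factors of $S$ be pulled outside; no step here is a genuine obstacle.
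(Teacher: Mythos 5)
Your proof is correct and follows essentially the same route as the paper's: compute $\tilde A = SA$, $\tilde B = SB$, deduce the kernel identity $K_{\tilde E}(\omega,z) = S(z)S(\bar\omega)K_E(\omega,z)$ from (\ref{deBrangesRepro}), then specialize $\omega$ to $\alpha$ and $\bar\alpha$ and cancel. You additionally spell out why $\tilde E$ satisfies the de Branges inequality and why $S(z)$, $S(\alpha)$, $S(\bar\alpha)$ are nonzero where needed, which the paper leaves implicit.
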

\begin{proof}
First we compute $K_{\tilde{E}}(\omega,z)$ in terms of $E(z)$. Notice that 
	\[
			\tilde{A}(z)=\dfrac{\tilde{E}(z)+\tilde{E}^{*}(z)}{2}=S(z)A(z),\text{ and }\;\; \tilde{B}(z)=i\dfrac{\tilde{E}(z)-\tilde{E}^{*}(z)}{2}=S(z)B(z).
	\]	
 By (\ref{deBrangesRepro}) it follows that
    	\[ K_{\tilde{E}}(\omega,z)=S(z)S(\ol{\omega})K_{\tilde{E}}(\omega,z).\]
From this identity we see that there exists a non-zero constant $c$ such that $K_{\tilde{E}}(\alpha,z)=c K_{\tilde{E}}(\ol{\alpha},z)$ if, and only if,
	\[
		K_{E}(\alpha,z)=c\dfrac{S(\alpha)}{S(\ol{\alpha})}K_{E}(\ol{\alpha},z).
	\]	
\end{proof}
\begin{lemma}[P\'olya]\label{polyaLemma}
	Suppose $E(z)$ satisfies the conditions of Theorem \ref{linLemma}. Then there are numbers $b,c\in \C$ satisfying $\Real(b)\geq0$ and $a\neq0$, such that
    	\begin{equation}\label{polyaFactorization}
    		E(z)=az^{k}e^{2\pi i bz }\dprod_{n=1}^{\infty}\lp1 -\dfrac{z}{z_{n}}\rp e^{h_{n}z} 
    	\end{equation}
where $z_{1},z_{2},...$ are the non-zero zeros of $E(z)$ (listed with appropriate multiplicity), $k\geq0$ is the order of the zero at $0$, and  $h_{n}=\Real(z_{n})|z_{n}|^{-2}$. 
\end{lemma}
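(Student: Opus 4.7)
The plan is to prove the factorization by combining three classical ingredients: Krein's theorem to promote bounded type to exponential type, Hadamard's factorization of entire functions of order one, and the Blaschke-type condition satisfied by the zeros of a bounded-type function in $\mU$.

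First I would establish that $E$ is of exponential type. The de Branges inequality $|E(\ol z)| < |E(z)|$ on $\mU$ says $E^{*}/E$ lies in $H^{\infty}(\mU)$ and is in particular of bounded type there; since $E$ is of bounded type in $\mU$ by hypothesis, $E^{*} = E \cdot (E^{*}/E)$ is as well. Conjugating, $E$ is of bounded type in the lower half-plane too, and Krein's theorem then gives that $E$ is of exponential type with $\int \log^{+}|E(x)|(1 + x^{2})^{-1}\,dx < \infty$. In particular, $E$ has order at most one.

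Next, Hadamard's theorem yields
\[
E(z) = e^{\alpha + \beta z}\, z^{k} \prod_{n}\lp 1 - z/z_{n}\rp e^{z/z_{n}}
\]
for some $\alpha, \beta \in \C$. The de Branges condition forces $E$ to be zero-free in $\mU$, so every $z_{n}$ lies in the closed lower half-plane and $\eta_{n} := -\Imaginary(z_{n})/|z_{n}|^{2} \geq 0$. Writing $1/z_{n} = h_{n} + i\eta_{n}$ splits $e^{z/z_{n}} = e^{h_{n}z}\, e^{i\eta_{n}z}$. To absorb the imaginary parts into a single exponential factor, I would show $\sum_{n}\eta_{n} < \infty$: the zeros of $E^{*}/E$ in $\mU$ are exactly $\lb \ol{z_{n}} : \Imaginary(z_{n}) < 0\rb$, and since $E^{*}/E \in H^{\infty}(\mU)$, the Blaschke condition yields $\sum|\Imaginary(z_{n})|/(1 + |z_{n}|^{2}) < \infty$; the discreteness of the zero set (bounded away from the origin, the $z^{k}$ factor having already absorbed the zero at $0$) upgrades this to $\sum \eta_{n} < \infty$. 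Setting $s = \sum\eta_{n} \geq 0$, the absolutely convergent product $\prod_{n} e^{i\eta_{n} z}$ equals $e^{i s z}$, and collecting exponentials gives
\[
E(z) = a\, z^{k}\, e^{(\beta + is)z} \prod_{n}\lp 1 - z/z_{n}\rp e^{h_{n}z}
\]
with $a = e^{\alpha}$. Defining $b$ by $2\pi i b = \beta + is$ puts this in the claimed form.

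The sign condition $\Real(b) \geq 0$ is the real content of the lemma. I would verify it by computing the mean type of $E$ along $+i\R$ in two ways: on one hand, $\nu(E)$ is finite and controlled by the bounded-type hypothesis together with the de Branges inequality (the inner function $E^{*}/E$ has a Nevanlinna factorization on $\mU$ in which the exponential component has a sign determined by being inner); on the other hand, a direct asymptotic of the rearranged canonical product $\prod (1 - z/z_{n})e^{h_{n}z}$ on $+i\R$ (using the Blaschke sum just established) expresses $\nu(E)$ in terms of $\Real(b)$ and the zero distribution, and matching the two expressions forces the stated sign. Stages one and two are essentially bookkeeping once the Blaschke condition is in hand, but the sign determination in stage three is the main obstacle, since it is powered by the joint effect of the de Branges inequality and bounded type on $\mU$ rather than by either condition alone.
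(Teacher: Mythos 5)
The paper's proof of this lemma is a two-line citation: bounded type in $\mU$ together with the structure-function inequality implies $E$ is of P\'olya class (de Branges, Problem~34), and Theorem~7 of de Branges's book then yields the stated factorization, sign condition included. You instead try to reconstruct the argument from scratch: Krein's theorem to get exponential type, Hadamard's factorization, the Blaschke condition on the zeros of the inner function $E^{*}/E$ to absorb the imaginary parts $\Imaginary(1/z_{n})$ into a single exponential, and then a separate determination of the sign of $\Real(b)$. The first two stages are sound in outline. Deriving $\sum_n |\Imaginary(z_n)|/(1+|z_n|^2) < \infty$ from the Blaschke condition and then converting to $\sum_n \eta_n < \infty$ using that the non-zero zeros of an entire function are bounded away from $0$ and tend to $\infty$ is a legitimate and clean way to get the modified Hadamard factors; this is essentially the content behind de Branges's Theorem~7 specialized to exponential type.

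The genuine gap is exactly where you flag it: the sign of $\Real(b)$. You describe a plan (compute $\nu(E)$ two ways and match), but you do not actually identify which features of the Nevanlinna factorization of $E^{*}/E$ interact with the canonical-product asymptotics to force the sign, nor do you show that the pieces are individually finite and cancel in the required way. Since you yourself call this "the real content of the lemma" and "the main obstacle," the proposal does not constitute a proof; it is a proof plan with the hardest step left open. The paper avoids this difficulty entirely by outsourcing both the P\'olya-class reduction and the factorization (including the sign) to de Branges's Theorem~7, whose proof indeed handles the sign via the monotonicity-in-$y$ property built into the definition of P\'olya class. If you want to complete your own route, the cleanest path is probably to extract the P\'olya-class property $|E(x+iy_2)| \geq |E(x+iy_1)|$ for $y_2 \geq y_1 > 0$ directly from the de Branges inequality plus bounded type, and then read off the sign of the exponential coefficient from the monotonicity of $|E(iy)|$ as $y\to\infty$, rather than trying to extract it from a mean-type bookkeeping argument alone.
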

\begin{proof}
The assumption that $E(z)$ is of bounded type in $\mU$ implies \cite[Problem 34]{deBranges} that $E(z)$ is an entire function of P\'olya class. The theorem then follows from  \cite[Theorem 7]{deBranges} .
\end{proof}

\begin{lemma}\label{strictFactorLemma}
	Let $E(z)$ be a strict de Branges function and let $\alpha\in\mU$. If there is a constant $c\neq0$ such that 
	\[
		K_{E}(\alpha,z)=c K_{E}(\ol{\alpha},z),
	\]
	then there are numbers $b\geq0$, and $a\neq0$, such that
	\[
		E(z)=\begin{cases}
				ae^{2\pi i bz}(z-\omega) & \text{ if } E(\omega)=0\text{ and }\Imaginary(\omega)<0, \\
				ae^{2\pi i bz} & \text{ if } E(z) \text{ has no zeros}.
			\end{cases}
	\]
\end{lemma}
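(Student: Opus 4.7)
The plan is to translate the kernel identity $K_{E}(\alpha,z) = c K_{E}(\ol{\alpha},z)$ into an algebraic constraint on $E$. Starting from the formula (\ref{deBrangesRepro}) and substituting $A = (E+E^{*})/2$, $B = i(E-E^{*})/2$, a direct calculation yields the useful form
\[
K_{E}(\omega,z) = \dfrac{i\bigl[E(z)\ol{E(\omega)} - E^{*}(z)\ol{E^{*}(\omega)}\bigr]}{2\pi(z-\ol{\omega})}.
\]
Clearing the denominators $(z-\ol{\alpha})$ and $(z-\alpha)$ in the hypothesis, and using the identities $\ol{E(\ol{\alpha})} = E^{*}(\alpha)$ and $\ol{E^{*}(\ol{\alpha})} = E(\alpha)$, the equation $K_{E}(\alpha,z) = c K_{E}(\ol{\alpha},z)$ becomes, as an identity of entire functions in $z$,
\[
E(z)\,Q(z) = E^{*}(z)\,P(z),
\]
where $P$ and $Q$ are polynomials of degree at most one in $z$ whose coefficients are explicit expressions in $\alpha$, $c$, $E(\alpha)$, and $E^{*}(\alpha)$.

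Next, I would exploit the fact that $E$ is a strict de Branges function. The defining inequality $|E(\ol{z})| < |E(z)|$ in $\mU$ forces the zeros of $E$ to lie in the open lower half-plane (strictness excludes real zeros), while the zeros of $E^{*}$ lie in the open upper half-plane; in particular $E$ and $E^{*}$ share no zeros. Consequently, any zero $z_{0}$ of $E$ of multiplicity $m$ must be a zero of $P$ of order at least $m$, which, since $\deg P \leq 1$, forces $m = 1$ and caps the total number of zeros of $E$ at one. A brief comparison of coefficients also rules out the degenerate case $P \equiv 0$: this would require $E(\alpha) = 0$, contradicting the fact that $E$ has no zeros in $\mU$.

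Finally, I would invoke Lemma \ref{polyaLemma} in the two remaining cases. If $E$ has no zeros, the product in (\ref{polyaFactorization}) is empty and the factorization already reads $E(z) = a\,e^{2\pi i b z}$. If $E$ has a unique zero at $\omega$ with $\Imaginary(\omega) < 0$, the factorization is $E(z) = a\,e^{2\pi i b z}(1 - z/\omega)\,e^{\Real(\omega) z/|\omega|^{2}}$; absorbing the real-exponent convergence factor into the $e^{2\pi i b z}$ term (which only shifts the imaginary part of $b$) and absorbing $-1/\omega$ into $a$ yields $E(z) = a'\,e^{2\pi i b' z}(z-\omega)$. To pin down the exponential parameter as a real number of the correct sign, I would combine the sign condition on $\Real(b)$ recorded in Lemma \ref{polyaLemma} with a comparison of the de Branges inequality $|E(\ol{z})| < |E(z)|$ on horizontal lines $\{\Imaginary z = y\}$ as $y \to \infty$; together these determine both the reality and the sign of the remaining parameter.

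The main technical obstacle is the zero-count step: one must track multiplicities carefully to show that every zero of $E$ is matched by a zero of $P$ of at least the same order, and must separately exclude the degenerate case $P \equiv 0$. Once the identity $E\,Q = E^{*}\,P$ and the disjointness of the zero sets of $E$ and $E^{*}$ are in hand, the remainder is largely bookkeeping with the P\'olya factorization.
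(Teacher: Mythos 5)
Your argument is essentially correct for the main step, but it takes a genuinely different route from the paper. You rewrite the kernel directly in terms of $E$ and $E^*$ and extract the functional identity $E(z)\,Q(z)=E^{*}(z)\,P(z)$ with $\deg P,\deg Q\le 1$; you then use the fact that the zeros of a strict de Branges function lie entirely in the open lower half-plane (so $E$ and $E^{*}$ share no zeros) to conclude that every zero of $E$ is matched by a zero of $P$, forcing $E$ to have at most one zero, necessarily simple. The paper instead stays with the $A,B$ pair from (\ref{deBrangesRepro}): it shows $A(\alpha),B(\alpha)\ne0$, deduces that $A,zA,B,zB$ are linearly dependent, solves for $A=R\,B$ with $R$ a degree-one rational function, and reads off ``at most one non-real zero'' from the factorization $E=A-iB$. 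Both routes are valid; yours is arguably more transparent because the disjointness of the zero sets of $E$ and $E^{*}$ makes the zero count immediate, and it also rules out the degenerate case $P\equiv 0$ explicitly (the coefficient computation reduces it to $E(\alpha)=0$, impossible since $\alpha\in\mU$), which the paper elides.

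The weak point is your final step. The de Branges inequality on horizontal lines compares only $|e^{2\pi i b\ol{z}}|$ with $|e^{2\pi i b z}|$, and this ratio depends solely on $\Real(b)$; it is completely blind to $\Imaginary(b)$. So combining P\'olya's constraint on $\Real(b)$ with a $y\to\infty$ growth comparison cannot, by itself, ``determine both the reality and the sign'' of $b$ --- it only constrains the real part. The fix is actually sitting in the identity you already have: $E\,Q=E^{*}\,P$ forces $E/E^{*}$ to be a rational function, and writing $E(z)=a\,e^{\lambda z}(z-\omega)$ one finds $E/E^{*}$ carries a factor $e^{(\lambda-\ol{\lambda})z}$, which is rational only when $\lambda=\ol{\lambda}$. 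That is the step that pins down the exponential parameter, and it would strengthen your proof to say so explicitly rather than appealing to growth on horizontal lines. (The paper handles this step by invoking Lemma \ref{removeRealZerosLemma}, which is itself terse; your $E\,Q=E^{*}\,P$ identity makes the reasoning cleaner once the right observation is extracted from it.)
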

\begin{proof}
If $K_{E}(\alpha,z)=cK_{E}(\ol{\alpha},z)$ for some constant $c\neq 0$, then (\ref{deBrangesRepro}) gives
	\begin{equation}\label{reproScale}
		\dfrac{B(z)\ol{A(\alpha)}-A(z)\ol{B(\alpha)}}{\pi(z-\ol{\alpha})}=c\dfrac{B(z){A(\alpha)}-A(z){B(\alpha)}}{\pi(z-{\alpha})}.
	\end{equation}
From this expression, it follows that  $A(\alpha)=0$ if, and only if, $B(\alpha)=0$. If $A(\alpha)=B(\alpha)=0$, then $E(\alpha)=0$ which is impossible in view of (\ref{deBrangesCondition}). Thus we have both $B(\alpha)\neq0$ and $A(\alpha)\neq 0$. This implies that $A(z),zA(z),B(z),$ and $zB(z)$ are linearly {\it dependent}, i.e. that there are constants $c_{1},..,c_{4}$ (not all zero) such that
	\begin{equation}\label{dependent}
		c_{1}A(z)+c_{2}zA(z)+c_{3}B(z)+c_{4}zB(z)=0.
	\end{equation}
To show that the constants are not all zero we will show that $c_{1}$ and $c_{2}$ cannot both be zero. Observe
	\[
		c_{1}=B(\ol{\alpha})\alpha-cB(\alpha)\ol{\alpha}, \text{ and }\;\;\; c_{2}=B(\ol{\alpha})-cB(\alpha).
	\] 	
If $c_{1}=c_{2}=0$, then $c=B(\ol{\alpha})\alpha/(B(\alpha)\ol{\alpha})=B(\ol{\alpha})/B(\alpha)$. But this is impossible because $\alpha\in\mU$. \newline
\indent Rearranging (\ref{dependent}) yields
	\[
		A(z)=\lp \dfrac{c_{4}z-c_{3}}{c_{2}z-c_{1}}\rp B(z).
	\]
Moreover $E(z)=A(z)-iB(z)$,  which leads us to 
	\[
		E(z)=A(z)\lp 1-i\lp \dfrac{c_{4}z-c_{3}}{c_{2}z-c_{1}}\rp \rp.
	\]
This factorization for $E(z)$, along with (\ref{deBrangesCondition}), implies that $E(z)$ has at most one non-real zero. Therefore, by Lemma \ref{polyaLemma}
	\[
		E(z)=\begin{cases}
				ae^{2\pi i bz}(z-\omega) & \text{ if } E(\omega)=0\text{ and }\Imaginary(\omega)<0, \\
				ae^{2\pi i bz} & \text{ if } E(z) \text{ has no zeros}
			\end{cases}
	\]
    for some $a\neq 0$, and $b\in\C$ with $\Real(b)\geq 0$. By Lemma \ref{removeRealZerosLemma} we may assume $b$ is real.
\end{proof}

\begin{proof}[Proof of Theorem \ref{linLemma}]
  If $E(z)$ is not a strict de Branges function, then $E(z)$ can be written as $E(z)=E_{0}(z)S(z)$ where $S(z)=S^{*}(z)$ and $E_{0}(z)$ has no real zeros. From (\ref{deBrangesCondition}), it follows that $S(z)$ has only real zeros and that $E_{0}(z)$ is a strict de Branges  function. By Lemma \ref{removeRealZerosLemma}, it suffices to prove the lemma when $E(z)$ is a strict de Branges function, and we assume that $E(z)$ is strict throughout the remainder of the proof.\newline
\indent By Lemma \ref{strictFactorLemma}, if $K_{E}(\alpha,z)$ and $K_{E}(\ol{\alpha},z)$ are linearly dependent, then
there exists a $b\geq0$, and $a\neq0$, such that
	\[
		E(z)=\begin{cases}
				ae^{2\pi i bz}(z-\omega) & \text{ if } E(\omega)=0\text{ and }\Imaginary(\omega)<0, \\
				ae^{2\pi i bz} & \text{ if } E(z) \text{ has no non-real zeros}.
			\end{cases}
	\]
In light of this structure, it suffices to show that $K_{E}(\alpha,z)$ and $K_{E}(\ol{\alpha},z)$ are linearly dependent if, and only if, $b=0$.
    
 If $b=0$, then $H_{E}$ is one dimensional and $K_{E}(\alpha,z)$ and $K_{E}(\alpha,z)$ must be linearly dependent. \newline
  \indent  Suppose $b>0$ and $\omega$ is the single non-real zero of $E(z)$. The function $G(z)=1$ is then in $H_{E}$ since $\nu(1/E)=0-\nu(E)\leq0$ and $|t-\omega|^{-2}$ is integrable. Similarly, the function $H(z)=e^{-2\pi i b z}$ is in $H_{E}$. Now suppose, by way of contradiction, that there is a non-zero constant $c\in\C$ such that $F(\alpha)=cF(\ol{\alpha})$ for all $F\in H_{E}$. The constant $c$ must be equal to $1$, because \[ 1=G(\alpha)=cG(\ol{\alpha})=c.\] The function $H(z)$ is in $H_{E}$ but $H(\alpha)\neq cH(\ol{\alpha})=H(\ol{\alpha})$, a contradiction. \newline    
\indent  If $E(z)$ has no zeros, then we must have $b>0$, by (\ref{deBrangesCondition}). In this case, the reproducing kernel for $H_{E}$ is given by 
	    \begin{equation}
		  K(\omega,z)=\dfrac{\sin2\pi b(z-\overline{\omega})}{\pi(z-\overline{\omega})}.
	    \end{equation}
    $K(\alpha,z)$ and $K(\ol{\alpha},z)$ are linearly independent in $H_{E}$ if, and only if,  
    \[ K(\alpha,\alpha)^{2}-|K(\alpha,\ol{\alpha})|^{2}=\lb\dfrac{\sinh 4\pi b\Imaginary(\alpha)}{2\pi\Imaginary(\alpha)}\rb^{2}-4b^{2}\neq0.\]
    This holds if, and only if, $\Imaginary(\alpha)>0$.
    \end{proof}

\subsection{Trigonometric Polynomials}
{\it In this final section we prove an analogue of Theorem \ref{interpolateTheorem} for trigonometric polynomials.}\newline
\indent Let $N\geq 1$ and $\mathscr{P}_{N}\subset \C[z]$ be the complex vector space of polynomials of degree at most $N$. Define an inner product $\al\cdot,\cdot\ar$ on $\mathscr{P}_{N}$ by 
      \[
	    \al p,q \ar=\dint_{S^{1}}p(\theta)\ol{q(\theta)}d\sigma(\theta)
      \]
where $S^{1}=\lb z\in\C: |z|=1\rb$ and $\sigma$ is the Haar probability measure on $S^{1}$. Let $\|\cdot\|$ be the norm induced by $\al\cdot,\cdot\ar$ and $e(t)=e^{2\pi i t}$. To simplify notation, we will define a function $L:\C\times\Z^{+}\ra \R$ by
	  \[
		L(\omega,M)=\dsum_{m=0}^{M}|\omega|^{m}=\begin{cases}
											\dfrac{1-|\omega|^{M}}{1-|\omega|} & \text{ if } |\omega|\neq1 \\
											M+1 & \text{ if } |\omega|=1.
											\end{cases}
	  \]

	  \begin{lemma}\label{trigLemma}
		The space $(\mathscr{P}_{N},\al\cdot,\cdot\ar)$ is a reproducing kernel Hilbert space with reproducing kernel $K:\C\times\C\ra\C$ given by
			  \[
				  K(\omega,z)=\dsum_{n=0}^{N}z^{n}\ol{\omega}^{n}.
			  \]
		Furthermore, if $\alpha\neq0$, the functions $z\mapsto K(\alpha,z)$ and $z\mapsto K(1/\ol{\alpha},z)$ are linearly independent if, and only if, $\alpha\not\in S^{1}$.
	  \end{lemma}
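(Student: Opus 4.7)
The plan is to verify the reproducing-kernel formula by exhibiting an orthonormal basis, and then to settle the linear-independence statement via the $2\times 2$ Gram determinant of $\{K(\alpha,\cdot),K(1/\ol{\alpha},\cdot)\}$. For the first part, I would parametrize the unit circle by $\theta=e^{2\pi it}$, so that
\[
\al z^{m},z^{n}\ar=\dint_{0}^{1}e^{2\pi i(m-n)t}\,dt=\delta_{mn}.
\]
This shows $\{1,z,\ldots,z^{N}\}$ is an orthonormal basis of $\mathscr{P}_{N}$, from which point evaluation is automatically continuous (a finite sum of bounded coordinate functionals), so $\mathscr{P}_{N}$ is a reproducing kernel Hilbert space. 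To verify that $K(\omega,z)=\sum_{n=0}^{N}z^{n}\ol{\omega}^{n}$ reproduces evaluation, write $p(z)=\sum_{n=0}^{N}c_{n}z^{n}$ and compute
\[
\al p,K(\omega,\cdot)\ar=\dsum_{n,m=0}^{N}c_{n}\,\ol{\ol{\omega}^{m}}\,\al z^{n},z^{m}\ar=\dsum_{n=0}^{N}c_{n}\omega^{n}=p(\omega).
\]

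For the second part, the reproducing property together with the explicit formula for $K$ gives
\[
\al K(\alpha,\cdot),K(1/\ol{\alpha},\cdot)\ar=K(\alpha,1/\ol{\alpha})=\dsum_{n=0}^{N}(1/\ol{\alpha})^{n}\ol{\alpha}^{n}=N+1,
\]
while $K(\alpha,\alpha)=\dsum_{n=0}^{N}|\alpha|^{2n}$ and $K(1/\ol{\alpha},1/\ol{\alpha})=\dsum_{n=0}^{N}|\alpha|^{-2n}$. Linear independence of the two kernels is equivalent to non-vanishing of the Gram determinant
\[
\lp\dsum_{n=0}^{N}|\alpha|^{2n}\rp\lp\dsum_{n=0}^{N}|\alpha|^{-2n}\rp-(N+1)^{2}.
\]

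The final step is a Cauchy--Schwarz argument: applied to the $(N+1)$-tuples $(|\alpha|^{n})_{n=0}^{N}$ and $(|\alpha|^{-n})_{n=0}^{N}$, the inequality shows this product is at least $(N+1)^{2}$, with equality if and only if the two tuples are proportional. Proportionality forces $|\alpha|^{2n}$ to be constant in $n$, which occurs precisely when $|\alpha|=1$. Consequently the Gram determinant vanishes iff $\alpha\in S^{1}$, so the kernels are linearly independent iff $\alpha\notin S^{1}$. I do not expect a real obstacle here; the only point requiring care is keeping track of the conjugations in $1/\ol{\alpha}$, in particular the telescoping identity $(1/\ol{\alpha})^{n}\ol{\alpha}^{n}=1$ that makes $K(\alpha,1/\ol{\alpha})$ collapse to $N+1$.
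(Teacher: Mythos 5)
Your proof is correct and follows essentially the same route as the paper: verify the reproducing-kernel formula, reduce linear independence to non-vanishing of the $2\times 2$ Gram determinant, compute the three inner products, and settle the resulting inequality with an equality characterization. The one cosmetic difference is the closing inequality: you invoke Cauchy--Schwarz on the tuples $(|\alpha|^{n})_{n}$ and $(|\alpha|^{-n})_{n}$, whereas the paper applies the arithmetic--geometric mean inequality to each factor $\sum_{n}|\alpha|^{\pm n}$ separately and multiplies; both give $\bigl(\sum|\alpha|^{2n}\bigr)\bigl(\sum|\alpha|^{-2n}\bigr)\geq (N+1)^2$ with equality iff $|\alpha|=1$. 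If anything your Cauchy--Schwarz version is slightly cleaner here, since it directly handles the quantities $|\alpha|^{2n}$ that actually appear in $K(\alpha,\alpha)$ and $K(1/\ol{\alpha},1/\ol{\alpha})$ (the paper's displayed AM--GM inequality has $|\alpha|^{n}$ where $|\alpha|^{2n}$ is meant, an evident but harmless slip). You also spell out the orthonormality of $\{1,z,\dots,z^{N}\}$ and the reproducing computation, which the paper waves off as standard; that is fine and arguably more complete.
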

	  \begin{proof}
		The form of the reproducing kernel is a standard fact, but we note that it doesn't depend on the choice of orthonormal basis. We need only verify the latter statement.  The functions in question are linearly independent if and only if 
			    \begin{equation}\label{condition1}
				  \|K(\cdot,\alpha)\|^{2}\|K(\cdot,1/\ol{\alpha})\|^{2}-|\al K(\cdot,\alpha),K(\cdot,1/\ol{\alpha})\ar|^{2}\not= 0.
			    \end{equation}
		 Explicitly, these terms are given by \[ K(\alpha,\alpha)=L(|\alpha|,N),~\;\;K(\alpha,1/\ol{\alpha})=N+1\text{, and }~ \;\;K(1/\ol{\alpha},1/\ol{\alpha})=L(|\alpha|^{-1},N).\] 
		 Now (\ref{condition1}) can be rewritten as
			    \[
				  L(|\alpha|,N)L(|\alpha|^{-1},N)-(N+1)^{2}\neq 0.
			    \]
		But by the arithmetic-geometric mean inequality we have
			    \[
				    (N+1)^{-2}\lb\dsum_{n=0}^{N}|\alpha|^{n}\rb\lb\dsum_{n=0}^{N}|\alpha|^{-n}\rb \geq \lb\dprod_{n=0}^{N}|\alpha|^{n}\rb^{1/(N+1)}\lb\dprod_{m=0}^{N}|\alpha|^{-m}\rb^{1/(N+1)}=1.
			    \]
		Equality holds if, and only if, $|\alpha|^{m}=|\alpha|^{n}$ for $n,m=1,2,...,N$ which implies $|\alpha|=1$.
	  \end{proof}

      \begin{corollary} \label{trig}
            Suppose $\alpha\neq0$, $\alpha\not\in S^{1}$, $\beta\in\C$, and $N\geq1$.
            If $F(z)$ is a Laurent polynomial of degree at most $N$,
              \begin{enumerate}
                    \item $F(\theta)\geq0$ for $\theta\in S^{1}$, and
              \item $F(\alpha)=\beta$,
              \end{enumerate}
            then
              \begin{equation} \label{extrem}
                \dfrac{|\beta|L(\alpha,N)^{1/2}L(\alpha^{-1},N)^{1/2}-(N+1)\Real(\beta )}{L(\alpha,N)L(\alpha^{-1},N)-(N+1)^{2}}\leq \dfrac{1}{2}\dint_{S^{1}}F(\theta)d\sigma(\theta).
              \end{equation}
            Equality occurs in (\ref{extrem}) if, and only if, $F(z)=p(z)p^{*}(z)$, where
              \begin{equation}\label{U}
              p(z)= \lambda_{1} K(\alpha,z) + \lambda_{2}K(1/\overline{\alpha},z).
              \end{equation}
            The coefficients $\lambda_{1}$ and $\lambda_{2}$ can be explicitly computed in terms of $K,$ $\alpha$, $\beta$ and $N$.
      \end{corollary}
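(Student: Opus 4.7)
The plan is to mirror the four-step scheme used to prove Theorem \ref{interpolateTheorem}, replacing each analytic ingredient by its natural counterpart on the unit circle. Namely: (1) reduce the $L^1$ problem to an $L^2$ minimization by factoring $F$ as a ``square modulus''; (2) recognize the $L^2$ object as an element of the reproducing kernel Hilbert space $(\mathscr{P}_N,\langle\cdot,\cdot\rangle)$; (3) re-express the interpolation datum $F(\alpha)=\beta$ via the kernel identities of Lemma \ref{trigLemma}; and (4) invoke the abstract Hilbert space extremal inequality, Lemma \ref{extremalHilbert}.

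For step (1), the role played by Proposition \ref{fejerThm} in the Paley--Wiener setting is played here by the classical Fej\'er--Riesz theorem: a Laurent polynomial $F$ of degree at most $N$ that is non-negative on $S^1$ admits a factorization $F(z)=p(z)p^{*}(z)$ for some $p\in\mathscr{P}_N$, where $p^{*}$ denotes the involution $p^{*}(z)=\overline{p(1/\overline{z})}$ (so that $p(\theta)p^{*}(\theta)=|p(\theta)|^2$ for $\theta\in S^1$). In particular
\[
\int_{S^1}F(\theta)\,d\sigma(\theta)=\int_{S^1}|p(\theta)|^2\,d\sigma(\theta)=\|p\|^2,
\]
so the $L^1$ minimization is converted to minimizing $\|p\|^{2}$ over $p\in\mathscr{P}_N$.

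For step (3), using the reproducing identity $q(\omega)=\langle q,K(\omega,\cdot)\rangle$ from Lemma \ref{trigLemma}, we rewrite the constraint $F(\alpha)=\beta$ as
\[
\langle p,K(\alpha,\cdot)\rangle\,\overline{\langle p,K(1/\overline{\alpha},\cdot)\rangle}=\beta,
\]
since $p^{*}(\alpha)=\overline{p(1/\overline{\alpha})}=\overline{\langle p,K(1/\overline{\alpha},\cdot)\rangle}$. This places us exactly in the hypothesis of Lemma \ref{extremalHilbert}, with $\bu=K(\alpha,\cdot)$ and $\bv=K(1/\overline{\alpha},\cdot)$. By the hypothesis $\alpha\neq 0$ and $\alpha\notin S^1$, Lemma \ref{trigLemma} guarantees that $\bu$ and $\bv$ are linearly independent, so the lemma applies. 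The relevant quantities are $\eta=\|\bu\|\|\bv\|=K(\alpha,\alpha)^{1/2}K(1/\overline{\alpha},1/\overline{\alpha})^{1/2}$, which is $L(\alpha,N)^{1/2}L(\alpha^{-1},N)^{1/2}$ in the notation of the corollary, and $\nu=\langle\bu,\bv\rangle=K(\alpha,1/\overline{\alpha})=N+1$. Substituting these into the conclusion of Lemma \ref{extremalHilbert} yields inequality (\ref{extrem}), with equality exactly when $p=\lambda_1 K(\alpha,\cdot)+\lambda_2 K(1/\overline{\alpha},\cdot)$ and $\lambda_1,\lambda_2$ read off from the formulas in that lemma.

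The principal subtleties I would flag are two bookkeeping matters rather than genuine obstacles: first, making sure the convention for $p^{*}$ (the ``circle adjoint'' $p^{*}(z)=\overline{p(1/\overline{z})}$) is used consistently so that $F(\alpha)=p(\alpha)p^{*}(\alpha)$ pairs correctly against the kernels $K(\alpha,\cdot)$ and $K(1/\overline{\alpha},\cdot)$; second, tracking the scalar $\gamma=\nu/|\nu|$ from Lemma \ref{extremalHilbert} (here $\nu=N+1>0$, so $\gamma=1$) so that the formulas for $\lambda_1,\lambda_2$ collapse to the claimed closed forms. Beyond these, the argument is a direct transcription of the proof of Theorem \ref{interpolateTheorem} into the setting of $(\mathscr{P}_N,\langle\cdot,\cdot\rangle)$.
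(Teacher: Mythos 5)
Your proposal is correct and follows exactly the route the paper takes: factor $F=pp^{*}$ by Fej\'er--Riesz, reformulate the constraint $F(\alpha)=\beta$ via the reproducing kernel of $(\mathscr{P}_N,\langle\cdot,\cdot\rangle)$, and apply Lemma \ref{extremalHilbert} with $\bu=K(\alpha,\cdot)$, $\bv=K(1/\overline{\alpha},\cdot)$ (linearly independent by Lemma \ref{trigLemma}). The paper simply states this in two sentences, while you have filled in the bookkeeping ($\eta=L(\alpha,N)^{1/2}L(\alpha^{-1},N)^{1/2}$, $\nu=N+1$, $\gamma=1$), all of which checks out.
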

      \begin{proof}
	   By Fej\'er's theorem, there exists a $p\in\mathscr{P}_{N}$ such that \[F(z)=p(z)p^{*}(z),\] where $p^{*}(z)=\ol{p(1/\ol{z})}$. The result follows from Lemma \ref{trigLemma} and Lemma \ref{extremalHilbert}.
      \end{proof}

\nocite{*}
\bibliographystyle{plain}
\bibliography{selberg_vanishing}	
\end{document}